\newtheorem{theorem}{Theorem}
\newtheorem{assumption}[theorem]{Assumption}
\newtheorem{definition}[theorem]{Definition}
\newtheorem{lemma}[theorem]{Lemma}
\newtheorem{corollary}[theorem]{Corollary}
\newtheorem{proposition}[theorem]{Proposition}
\journal{Journal of \LaTeX\ Templates}
\begin{document}

\begin{frontmatter}
\title{On the Well-posedness of a Nonlinear Fourth-Order Extension of Richards' Equation}


\author[mymainaddress]{Alaa Armiti-Juber \corref{mycorrespondingauthor}}
\cortext[mycorrespondingauthor]{Corresponding author}
\ead{alaa.armiti@mathematik.uni-stuttgart.de}

\author[mymainaddress]{Christian Rohde}

\address[mymainaddress]{Institute for Applied Analysis and Numerical Simulation, University of Stuttgart, Pfaffenwaldring 57, 70569 Stuttgart, Germany}

\begin{abstract}
We study a nonlinear fourth-order extension of Richards' equation that describes infiltration processes in unsaturated soils. We prove the well-posedness of the fourth-order equation by first applying Kirchhoff's transformation to linearize the higher-order terms. The transformed equation is then discretized in time and space and a set of a priori estimates is established. These allow, by means of compactness theorems, extracting a unique weak solution. Finally, we use the inverse of Kirchhoff's transformation to prove the well-posedness of the original equation. 
\end{abstract}

\begin{keyword}
Richards' equation \sep Nonlinear fourth-order extension \sep Weak solutions \sep Existence \sep Uniqueness \sep Kirchhoff's transformation
\end{keyword}

\end{frontmatter}

\section{Introduction}
\label{intro}
The process of fluid infiltration through unsaturated soil is an important part of the hydrological cycle as it represents many crucial examples, such as the flow of rain water or waste fluids into water aquifers and the flow of salt-water into coastal aquifers. These infiltration processes are usually described using Richards' model \cite{Bear91}. Recent experiments on fluid infiltration show that, even in homogeneous porous media, an initially planar front does not remain planar. The fluids infiltrate in preferential flow paths taking the shape of fingers with different widths and velocities. As most of the fluid channelizes in the fingers with high velocity, this may have crucial effects on the environment as it reduces the time needed for a contaminant to reach the underground water. Experiments show also that constant flux infiltration into homogeneous porous media leads to higher saturation at the wetting front than behind the front. This natural behavior is called saturation overshoots and is believed to cause the gravity-driven fingering \cite{Juanes,DiCarlo2013}. 

Richards' model is unable to describe saturation overshoots, because it is a second-order parabolic differential equation fulfilling the maximum principle. Moreover, it is unable to predict fingered flows, as nonlinear stability analysis shows that the model is unconditionally stable \cite{Egorov2003,Nieber2005}. Therefore, many approaches have been suggested to modify Richards' model \cite{CF,HG1993,Schweizer2012}. 

In this paper, we propose a nonlinear fourth-order extension of Richards' equation. This extension is related to the fourth-order model in \cite{CF}, while having the benefit that both second- and fourth-order terms can be simultaneously linearized using Kirchhoff's transformation, which is more convenient for the well-posedness analysis later.

We prove in this paper the well-posedness of the proposed nonlinear fourth-order extension of Richards' equation. The paper has the following structure: Section \ref{sec:modeling} presents Richards' equation and our proposed nonlinear fourth-order extension. In Section \ref{sec:4th-preliminaries}, Kirchhoff's transformation is applied to the fourth-order model as a preparation step for the analysis in the following section, then a list of assumptions is provided. In Section \ref{sec:4th-wellposedness}, we prove the well-posedness of the transformed fourth-order model. In Section \ref{sec:4th-regularity}, we improve the regularity of the weak solution. Finally, we prove the well-posedness of the nonlinear fourth-order model in Section \ref{sec:4th-original}. 

\section{Modeling in Unsaturated Soil}
\label{sec:modeling}
This section presents two models that describe fluid flows in unsaturated soils: the classical Richards' model and a nonlinear fourth-order extension of it.

\subsection{Richards' model}
We consider a bounded domain $\Omega \subset \mathbb{R}^3$ in the zone of unsaturated soil, where gas occupies most of the pores. Since gas in this zone is naturally connected to the atmospheric air, its pressure is constant and equals the atmospheric air pressure. Assuming that water infiltrates through the domain $\Omega$ under the effect of gravity and capillary forces, the two-phase flow model for the infiltrating water is a combination of the mass conservation equation and Darcy's law 
\begin{align}
\begin{array}{r l}
\phi\partial_{t} S+\nabla\cdot\textbf{v}&=0,\vspace{5pt}\\
\textbf{v}&=-K_f(S) \left(\dfrac{\nabla p}{\rho g}-\textbf{e}_3\right),
\end{array}
\label{eq:one-phase-flow}
\end{align}
respectively. Here, $S=S(\textbf{x},t)\in [0,1]$ is saturation, $\textbf{v}=\textbf{v}(\textbf{x},t)\in\mathbb R^3$ is averaged velocity and $p=p(\textbf{x},t)\in\mathbb R$ is pressure of the infiltrating water phase. The porosity $\phi$ is assumed to be constant, $\rho=1$ is water density, $g$ is the gravitational acceleration, and $\textbf{e}_3=(0,0,1)^T$. We also consider the closure relation 
\begin{align}
 p_c=p_g-p,
\end{align}
where $p_g=p_{\text{air}}$ is constant. Then, using the van Genuchten parameterization \cite{Genuchten1980} of the capillary pressure $p_c=p_c(S)$, equation \eqref{eq:one-phase-flow} simplifies to \textbf{Richards' equation}
\begin{equation}
 \phi\partial_{t}S+\nabla\cdot\left(K_{f}(S) \left( \textbf{e}_3+\frac{\nabla p_c(S)}{g}\right)\right)=0.
 \label{eq:Richards}
\end{equation}

\subsection{The Nonlinear Fourth-Order Extension}
\label{sec:4th-order-model}
We propose a fourth-order extension of Richards's equation \eqref{eq:Richards} by adding a third-order regularizing term to Darcy's equation, i.e.
\begin{align}
 \textbf{v}=K_{f}(S)\nabla\left(z+\frac{1}{g}\,p_{c}(S)\right)- \frac{\epsilon}{g}\,\nabla\Big(\nabla \cdot \big(K_f(S)\nabla p_c(S)\big)\Big),
\label{eq:darcy2}
\end{align}
where $\epsilon$ is a small parameter. Substituting \eqref{eq:darcy2} into the continuity equation in \eqref{eq:one-phase-flow} yields the nonlinear fourth-order model
\begin{align}
  \partial_{t}S+\nabla\cdot \Bigl(K_{f}(S)\big(\textbf{e}_3+ \frac{1}{g}\nabla p_c(S)\big)\Bigr)-\frac{\epsilon}{g}\Delta\nabla\cdot\Bigl( K_f(S)\nabla p_c(S)\Bigr)=0.
\label{eq:Armiti1}
\end{align}
\begin{figure}
\centering
\begin{tikzpicture}[scale = 0.8,>=latex]
\draw[->] (-3,0) -- (2,0) node[anchor=west] {$p$};
\draw[->] (0,-0.55) -- (0,3) node[anchor=east] {$S$} ;
\draw[thick, color=blue] (-3,0.1)--(-1.4,0.1) ;
\draw[thick, color=blue] (-1.4,0.1) parabola[bend at start] (-0.7,1);
\draw[thick, color=blue] (-0.7,1) parabola[bend at end] (0,2);
\draw[thick, color=blue] (0,2)--(2,2) ;
\draw  (0,2.1) node[anchor=east] {$1$};
\end{tikzpicture}
\hspace{1.5cm}
\begin{tikzpicture}[scale = 0.8,>=latex]
\draw[->] (-1,0) -- (3,0) node[anchor=west] {$S$};
\draw		(2,0) node[anchor=north] {1};
\draw		(0,2) node[anchor=east] {1};
\draw[->] (0,0) -- (0,3) node[anchor=east] {$K_f$} ;
\draw[thick, color=blue] (0,0) parabola[bend at start] (2,2);
\draw[dashed] (2,0) -- (2,2);
\end{tikzpicture}
\caption{Water saturation $S$ as a function $p:=-\frac{p_c}{g}$ (left). Conductivity $K_f$ as a function of $S$ (right).}
\label{fig:saturation and konductivity}
\end{figure}
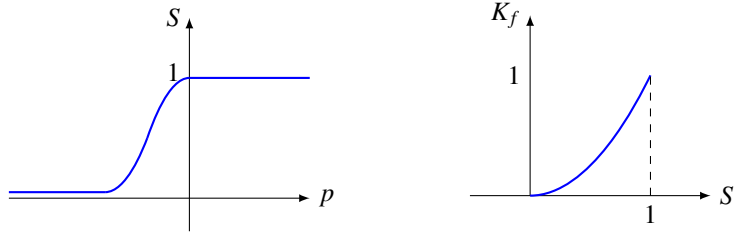
Since capillary pressure $p_c$ is a strictly monotone decreasing function of saturation $S$, its inverse is well-defined. Thus, we can write saturation $S$ as an increasing function of $p\coloneqq -\frac{p_c}{g}$ such that
\begin{align*}
 S(p)=\left\{ \begin{array}{cc}
               S(-\frac{p_c}{g}),&\quad p\leq 0,\\
               1, & \quad p>0,
              \end{array}\right.
\end{align*}
as shown in Figure \ref{fig:saturation and konductivity}. The Figure shows also the conductivity $K_f=K_f(S(p))$, which is a monotone increasing function of $S$. Using the inverse function $p$, the fourth order model \eqref{eq:Armiti1} can be written as
\begin{equation}
  \partial_{t}S(p)+ \nabla\cdot\Big( K_f\big(S(p)\big)\textbf{e}_3\Big) -\nabla\cdot \Bigl(K_{f}\big(S(p)\big)\nabla p\Bigr)+\gamma\Delta\nabla\cdot\Bigl( K_f\big(S(p)\big)\nabla p\Bigr)=0,
\label{eq:Armiti2}
\end{equation}
in $\Omega\times (0,T)$ with pressure $p$ is the unknown and $\gamma\coloneqq \tfrac{\epsilon}{g}$. Since we are interested in the existence of weak solutions in the space $L^2(0,T;H_0^2(\Omega))$, equation \eqref{eq:Armiti2} is augmented with the initial and boundary conditions
\begin{align}
  \begin{array}{r c c}
	p(.,0) & =  p^0 & \quad\text{ in } \Omega,\\
	p & = 0  & \quad\text{ on }\partial \Omega\times [0,T], \\
 	\nabla p\cdot \textbf{n} &= 0 & \quad\text{ on }\partial \Omega\times [0,T],
	\end{array}
 	\label{eq:4th-IBC}
\end{align}
where $\textbf{n}$ is the outer normal vector at the boundary $\partial \Omega$.

\section{Preliminaries and Assumptions}
\label{sec:4th-preliminaries}
In this section, we apply Kirchhoff's transformation to the fourth-order model \eqref{eq:Armiti2} to linearize the second- and the fourth-order terms. Then, we summarize all assumptions that are required throughout the paper.  

Kirchhoff's transformation is a continuous monotone increasing map defined as
	\begin{equation*}
        \psi :=\left\{
	\begin{array}{l} 
	\mathbb{R}\rightarrow\mathbb{R}	\\
	p\mapsto \psi(p)= \int_{0}^{p} K_{f}(S(\tau))d\tau 
	\end{array}\right.
	\end{equation*}
where $\psi(p)$ is the transformed pressure. We set $u:=\psi(p)$. Then, as Figure \ref{fig:Kirchhoff} shows, we have $u=p$ for $p\geq 0$, because $K_{f}(S(p))=1$. Moreover, there exists a lower bound $u_{l}<0$ of $u$ such that $u_{l}\coloneqq \lim_{p\rightarrow -\infty} \psi(p)=-\int_{-\infty}^0  K_{f}(S(p))\,dp$. In other words, the lower bound $u_{l}$ equals the area under the graph of $K_f$ multiplied by $-1$.

Applying the Leibniz rule on the transformed pressure $u$ gives
\begin{align}
\begin{array}{rl}
 \nabla u&=\,K_f \big(S(p)\big)\nabla p,\\
\Delta u&=\,\nabla\cdot \big(K_{f}(S(p))\nabla p\big),\\
\partial_t u&=\,K_f \big(S(p)\big)\partial_t p.
\end{array}
\label{eq:chainrule}
\end{align}
As the inverse function $\psi^{-1}:(u_{l},\infty)\rightarrow \mathbb{R}$ is well-defined, we define the function
	\begin{equation*}
	 b(u):=S(\psi^{-1}(u)),
	\end{equation*}
such that  
\begin{equation*}
 b^{\prime}(u)=\dfrac{S^{\prime}(p)}{K_{f}(S(p))}.
\end{equation*} 
Then, the transformed fourth-order model is given as:	
	\begin{equation}
	\partial_{t}b(u) + \nabla\cdot\Bigl( K_f( b(u))\textbf{e}_3\Bigr)- \Delta u + \gamma\Delta^2 u=0,
	\label{eq:linear1}
	\end{equation}
with the transformed initial and boundary conditions 
	\begin{equation}
	\begin{array}{r l c}
	u(.,0)&=u^0 & \text{in } \Omega\times \lbrace 0 \rbrace, \\
	u&=0 & \text{  on } \partial \Omega\times (0,T),\\
	\nabla u\cdot \textbf{n}  &=0 & \text{  on } \partial \Omega\times (0,T).\\
	\end{array}
	\label{eq:IBC}
	\end{equation}

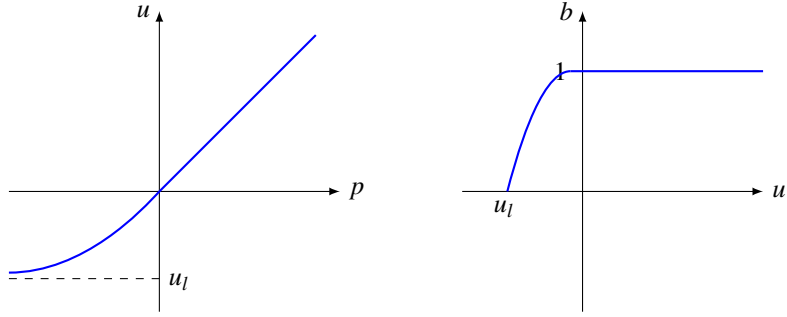
\begin{figure}
\centering
\begin{tikzpicture}[scale = 0.8,>=latex]
\draw[->] (-2.5,0) -- (3,0) node[anchor=west] {$p$};
\draw[->] (0,-2) -- (0,3) node[anchor=east] {$u$} ;

\draw[thick, color=blue]((0,0) parabola[bend at end] (-2.5,-1.35)   ;
\draw[thick, color=blue] (0,0) -- (2.6,2.6);
\draw[dashed] (-2.5,-1.45)--(0,-1.45);
\draw (0,-1.5) node[anchor=west] {$u_{l}$};
\end{tikzpicture}
\hspace{1.0cm}
\begin{tikzpicture}[scale = 0.8,>=latex]
\draw[->] (-2,0) -- (3,0) node[anchor=west] {$u$};
\draw	(0,0)
		(-1.3,0) node[anchor=north] {$u_{l}$};
\draw		
		(-0.1,2.01) node[anchor=east] {1};
\draw[->] (0,-2) -- (0,3) node[anchor=east] {$b$} ;
\draw[thick, color=blue] (-0.2,2) parabola[bend at start] (-1.25,0);
\draw[thick, color=blue] (-0.2,2)--(3,2) ;
\end{tikzpicture}
\caption{Transformed pressure $u=\psi(p)$ (left) and transformed saturation $b(u)$ (right).}
\label{fig:Kirchhoff}
\end{figure}

For $\gamma=0$, the wellposedness of of \eqref{eq:linear1} is proved in \cite{Alt,Merz2010}. The wellposedness of other fourth-order parabolic equations describing thin film growth is investigated in \cite{Alvarez2015,Duan2016,Liu2007,Sandjo2015}. Due to the nonlinearity of the first term on the left side of equation \eqref{eq:linear1}, we follow \cite{Alt} and define the Legendre transform $B$ for the primitive of $b$, 
	\begin{equation}
	B:=\left \{
	\begin{array}{l}
	\mathbb{R}\rightarrow\mathbb{R}^{+}\\
	z\mapsto B(z)=\int_{0}^{z}b(z)-b(s) ds,
	\end{array}\right.
	\label{eq:legendre}
	\end{equation}
The map $B$ satisfies the following properties: 
	\begin{lemma}
 	 If $b$ is a continuous and monotone increasing function, then the Legendre transform $B$, defined in \eqref{eq:legendre}, satisfies
	\begin{equation}
	B(z)-B(z_{0})\geq \Bigl(b(z)-b(z_{0})\Bigr)z_{0},\nonumber
	\end{equation}
	 \begin{equation}
	B(z)-B(z_{0})\leq \Bigl(b(z)-b(z_{0})\Bigr)z,\nonumber
	\end{equation}
	for any $z, z_{0}\in \mathbb{R}$.
	\label{lem:Bproperties}
	\end{lemma}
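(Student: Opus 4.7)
The plan is to argue directly from the definition~\eqref{eq:legendre}. Since $b(z)$ is constant with respect to the integration variable $s$, I may rewrite $B(z) = b(z)z - \int_0^z b(s)\,ds$, and taking differences yields the identity
\begin{equation*}
B(z) - B(z_0) = b(z)z - b(z_0)z_0 - \int_{z_0}^z b(s)\,ds,
\end{equation*}
which will serve as the common starting point for both estimates.

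For the first inequality, I substitute this identity and cancel the $b(z_0)z_0$ term, so that $B(z) - B(z_0) \geq (b(z) - b(z_0))z_0$ reduces to the claim $b(z)(z - z_0) \geq \int_{z_0}^z b(s)\,ds$. The cleanest way to establish this uniformly in the sign of $z - z_0$ is to rewrite the difference as $\int_{z_0}^z (b(z) - b(s))\,ds$: monotonicity of $b$ guarantees that the integrand $b(z) - b(s)$ has the same sign as $z - z_0$ throughout the interval of integration, so that the oriented integral is nonnegative in either case, with equality when $z = z_0$. Symmetrically, cancelling the $b(z)z$ term reduces the second inequality to $b(z_0)(z - z_0) \leq \int_{z_0}^z b(s)\,ds$, equivalently $\int_{z_0}^z (b(s) - b(z_0))\,ds \geq 0$, which follows from the same sign argument applied at the base point $z_0$.

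There is no serious obstacle here: the lemma is the standard subdifferential estimate for a convex primitive $\Phi(z) = \int_0^z b(s)\,ds$ of the monotone function $b$, and the identity $B(z) = b(z)z - \Phi(z)$ simply encodes the Legendre conjugate relation. The only mild care required is in handling the case $z < z_0$, which the unified "sign of $z - z_0$" reformulation above dispatches without a case split. No differentiability of $b$ is needed; continuity ensures Riemann integrability and monotonicity drives both bounds.
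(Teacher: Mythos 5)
Your argument is correct and follows essentially the same route as the paper: you take the primitive $\Phi(z)=\int_0^z b(s)\,ds$, write $B(z)=b(z)z-\Phi(z)$, and reduce each inequality to the subgradient estimate for the convex function $\Phi$, which you verify directly from the integral and monotonicity. The paper packages this same step as ``Taylor expansion and convexity of $\phi$,'' so your integral-level sign argument is merely a more explicit rendering of the identical idea, with the added benefit of handling $z<z_0$ cleanly without a case split.
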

	\begin{proof}
	The continuity and the monotonicity of $b$ imply the existence of a convex function $\phi\in C^{1}(\mathbb{R},\mathbb{R})$ such that $b=\phi':=\dfrac{d\phi}{du}$. 
	The definition of $B$ and the property that $b=\phi'$ give
	\begin{equation}
	 B(z)=\int_{0}^{z}(b(z)-\phi'(s))ds=b(z)z-\Bigl(\phi(z)-\phi(0)\Bigr).
	\end{equation}
	Then, we have
	\begin{equation}
	B(z)-B(z_{0})=b(z)z-b(z_{0})z_{0}-\Bigl(\phi(z)-\phi(z_{0})\Bigr)\nonumber.
	\end{equation}
	To prove the first inequality, we add $\pm b(z)z_{0}$ to the right side of the above equation, then we have
	\begin{equation}
	B(z)-B(z_{0})=\Bigl(b(z)-b(z_{0})\Bigr)z_{0}\underbrace{-b(z)(z_0-z)-\Bigl(\phi(z)-\phi(z_{0})\Bigr)}_{M \coloneqq}\nonumber.
	\end{equation}
	The Taylor expansion and the convexity of $\phi$ imply that $M > 0$, which proves the inequality. The second inequality follows similarly by adding $\pm b(z_0)z$.
	\end{proof}
	
	We summerize all assumptions that are required throughout the paper:
\begin{assumption}  \begin{enumerate}
  \item The domain $\Omega \subset \mathbb{R}^{3} $ is an open bounded connected region with boundary $\partial \Omega\in C^5$ and $0< T < \infty$.
  \item The initial condition $u^{0}\in H^2_0(\Omega)$ satisfies $u^0,~b(u^{0}),~B(u^0)$ $\in L^{\infty}(\Omega)$.
  \item The function $b:(u_{l},\infty)\rightarrow (0,1] $ is strictly positive, monotone increasing and Lipschitz continuous.
  \item The conductivity function $K_f:(u_{l},\infty)\rightarrow (0,1]$ is Lipschitz continuous, strictly positive, and there exists a constant $ \beta >0$ such that, for all $z\in\mathbb{R}$, the following growth condition holds
  \begin{equation*}
 \Bigl(K_f(b(z))\Bigr)^2\leq \beta \Bigl(1+B(z)\Bigr).
\end{equation*}
 \end{enumerate}
\label{ass:fourth-order}
\end{assumption}

 \section{Well-posedness of the Transformed Fourth-Order Model} 
\label{sec:4th-wellposedness}
In this section, we prove the well-posedness of the transformed fourth-order model \eqref{eq:linear1} with the initial and boundary conditions \eqref{eq:IBC}. In section \ref{sec:apprximatemodel}, we approximate the time derivative in the model using backward differences producing a series of elliptic equations. Then, we apply Galerkin's method to these equations and prove the existence of weak solutions for the discrete problem. In Section \ref{sec:4th-a priori}, we prove a set of a priori estimates on the sequence of discrete solutions. These are used in Section \ref{sec:4th-convergence} to conclude a weak convergence of the sequence. Then, we prove that the limit is a weak solution for the transformed problem. Finally, we prove in Section \ref{sec:4th-uniqueness} the uniqueness of the weak solution.

\subsection{An Approximate Model}
\label{sec:apprximatemodel}
Let $N>0$ be an integer and $h=T/N$. Approximating $\partial_t b(u)$ in \eqref{eq:linear1} using the backward difference $\tfrac{b(u\cdot,t))-b(u(\cdot,t-h))}{h}$ yields for almost all $t\in[0,T]$ the biharmonic equation
\begin{equation}
\frac{b(u(\cdot,t))-b(u(\cdot,t-h))}{h}+\nabla\cdot \Bigl(K_f(b(u(\cdot,t)))\textbf{e}_3\Bigr)-\Delta u(\cdot,t)+\gamma\Delta^{2} u(\cdot,t) =0.
\label{eq:elliptic1}
\end{equation}

For any arbitrary but fixed $t\in [0,T]$, we consider weak solutions of \eqref{eq:elliptic1} in the Hilbert space $V(\Omega)= H_{0}^{2}(\Omega)$. Let $\{w_i\}_{i\in\mathbb{N}}$ be a countable orthonormal basis of $V$. By applying Galerkin's method to equation \eqref{eq:elliptic1}, the solution space $V(\Omega)$ is projected into a finite dimensional space $V_M(\Omega)$ spanned by a finite number of the orthonormal functions $w_i,~i=1,...,M$. For $h>0$ and a positive integer $M$, we search the coefficients $\alpha^h_{Mi} \in L^{\infty}((0,T)),\, i=1,\dots M$ defining the function
        \begin{equation}
        u^h_M(t):=\sum_{i=1}^{M}\alpha^{h}_{Mi}(t)w_{i}.
	\label{eq:discretesolution}
        \end{equation}
These coefficients are chosen such that, for almost all $t\in[0,T]$, the equation
	\begin{align}
        \frac{1}{h}\int_{\Omega}\Bigl(b\big(u^h_M(t)\big)-b\big(u^h_M(t-h)\big)\Bigr)w_i \,d\textbf{x}+& \int_{\Omega}\nabla u^h_M(t)\cdot\nabla w_i+\gamma\Delta u^h_M(t)\Delta w_i \,d\textbf{x}\nonumber\\=&\int_{\Omega} K_f\big(b(u^h_M(t))\big)\textbf{e}_3\cdot\nabla w_i\,d\textbf{x}
	\label{eq:approx.Eq}
        \end{align}
holds for all $i=1,\cdots,M$. The discrete initial condition is defined as 
        \begin{equation}
         u^{h}_M(t)=u_M^0,\quad \text{ for }t \in ( -h,0] ,
         \label{eq:approx.IC}
        \end{equation}
where, $u_M^0$ is the $L^2$-projection of the initial data $u_0$ into the finite dimensional space $V_M(\Omega)$.

To prove the existence of solutions for the discrete problem \eqref{eq:approx.Eq} and \eqref{eq:approx.IC}, we need the below stated technical lemma on the existence of zeros of a vector field \cite{Evans}.
	
\begin{lemma}
Let $r>0$ and $\textbf{v}:\mathbb{R}^n\rightarrow\mathbb{R}^n$ be a continuous vector field, which satisfies $\textbf{v}(\textbf{x})\cdot \textbf{x}\geq 0$ if $|\textbf{x}|=r$. Then, there exists a point $\textbf{x}\in B(0,r)$ such that $\textbf{v}(\textbf{x})=\textbf{0}$.
\label{lem:Bvectorfield}
\end{lemma}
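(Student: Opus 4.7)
The plan is to prove this by contradiction, invoking Brouwer's fixed point theorem on a suitably constructed auxiliary map. Suppose for contradiction that $\textbf{v}(\textbf{x}) \neq \textbf{0}$ for every $\textbf{x} \in \overline{B(0,r)}$. Then $|\textbf{v}|$ is continuous and strictly positive on the compact set $\overline{B(0,r)}$, so the map
\begin{equation*}
f : \overline{B(0,r)} \to \overline{B(0,r)}, \qquad f(\textbf{x}) = -r\,\frac{\textbf{v}(\textbf{x})}{|\textbf{v}(\textbf{x})|},
\end{equation*}
is well-defined and continuous, and in fact sends $\overline{B(0,r)}$ into its boundary sphere $\partial B(0,r)$.

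Next I would apply Brouwer's fixed point theorem to $f$: since $\overline{B(0,r)}$ is homeomorphic to a closed ball, there exists $\textbf{x}^{*} \in \overline{B(0,r)}$ with $f(\textbf{x}^{*}) = \textbf{x}^{*}$. Because the image of $f$ lies on $\partial B(0,r)$, this fixed point automatically satisfies $|\textbf{x}^{*}| = r$.

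Finally, I would derive the contradiction by computing
\begin{equation*}
\textbf{x}^{*} \cdot \textbf{v}(\textbf{x}^{*}) = f(\textbf{x}^{*}) \cdot \textbf{v}(\textbf{x}^{*}) = -r\,\frac{|\textbf{v}(\textbf{x}^{*})|^{2}}{|\textbf{v}(\textbf{x}^{*})|} = -r\,|\textbf{v}(\textbf{x}^{*})| < 0,
\end{equation*}
which contradicts the assumption $\textbf{v}(\textbf{x}) \cdot \textbf{x} \geq 0$ for all $\textbf{x}$ with $|\textbf{x}| = r$. Hence $\textbf{v}$ must vanish somewhere in $\overline{B(0,r)}$.

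The only subtle point, and the one I would expect to need a brief justification, is the appeal to Brouwer's theorem itself; the rest is a direct construction. An alternative would be to argue via the non-existence of a continuous retraction of $\overline{B(0,r)}$ onto $\partial B(0,r)$: if $\textbf{v}$ never vanished, then $f$ together with the identity on $\partial B(0,r)$ would produce such a retraction after a mild modification (using the hypothesis to show boundary points are fixed up to a homotopy), again yielding a contradiction. Either route reduces the lemma to a standard topological fact, so I would simply cite Brouwer and keep the proof to a few lines.
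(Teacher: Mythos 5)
Your argument is correct and is precisely the proof given in the reference the paper cites for this lemma (Evans, \emph{Partial Differential Equations}); the paper itself states the lemma without proof, referring to that source. One small point: your construction locates the zero in $\overline{B(0,r)}$, which matches the convention in Evans and is what is actually used in the paper, even though the statement writes $B(0,r)$.
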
 

\begin{lemma}
For any $M\in\mathbb{N}$, $h>0$, and almost any $t\in[0,T]$, let $u^{h}_M(t-h) \in V_M(\Omega) $ and  
\begin{align}
 h \leq \frac{1}{\beta},
 \label{eq:ass-delta t}
\end{align}
where $\beta>0$ is given as in Assumption \ref{ass:fourth-order}(4). Then, equation \eqref{eq:approx.Eq} has a solution $u^h_M(t)\in V_M(\Omega)$.
\end{lemma}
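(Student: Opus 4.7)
The approach is to recast the finite-dimensional Galerkin system \eqref{eq:approx.Eq} as the zero-problem for a continuous vector field on $\mathbb{R}^M$ and then invoke Lemma \ref{lem:Bvectorfield}. Concretely, for a vector $\alpha=(\alpha_1,\dots,\alpha_M)\in\mathbb{R}^M$, set $u=\sum_{i=1}^M\alpha_i w_i\in V_M(\Omega)$ and define $\mathbf{v}(\alpha)\in\mathbb{R}^M$ componentwise by
\[
v_i(\alpha) \,=\, \tfrac{1}{h}\!\int_{\Omega}\!\bigl(b(u)-b(u^{h}_M(t-h))\bigr)w_i\,d\mathbf{x} + \!\int_{\Omega}\!\bigl(\nabla u\!\cdot\!\nabla w_i + \gamma\Delta u\,\Delta w_i - K_f(b(u))\,\mathbf{e}_3\!\cdot\!\nabla w_i\bigr) d\mathbf{x}.
\]
Continuity of $\mathbf{v}$ follows from the Lipschitz continuity of $b$ and $K_f$ (Assumption \ref{ass:fourth-order}(3,4)), so the task reduces to exhibiting an $r>0$ with $\mathbf{v}(\alpha)\cdot\alpha\ge 0$ on the sphere $|\alpha|=r$.

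The key computation is the inner product $\mathbf{v}(\alpha)\cdot\alpha$, in which $\sum_i v_i\alpha_i$ reassembles into $u$. I would apply the second inequality of Lemma \ref{lem:Bproperties} with $z=u$ and $z_0=u^{h}_M(t-h)$ to bound the discrete time term from below by $\tfrac{1}{h}\int_{\Omega}\bigl(B(u)-B(u^{h}_M(t-h))\bigr)d\mathbf{x}$. The advective term is controlled by Cauchy--Schwarz and Young's inequality as $\int_{\Omega}K_f(b(u))\mathbf{e}_3\!\cdot\!\nabla u\le \tfrac12\int_{\Omega}|\nabla u|^2 + \tfrac12\int_{\Omega}K_f(b(u))^2\,d\mathbf{x}$, and Assumption \ref{ass:fourth-order}(4) bounds the last factor by $\tfrac{\beta}{2}\int_{\Omega}(1+B(u))\,d\mathbf{x}$. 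Assembling these,
\[
\mathbf{v}(\alpha)\cdot\alpha \;\ge\; \Bigl(\tfrac{1}{h}-\tfrac{\beta}{2}\Bigr)\!\int_{\Omega}\!B(u)\,d\mathbf{x} \,+\, \tfrac12\|\nabla u\|_{L^2}^2 \,+\, \gamma\|\Delta u\|_{L^2}^2 \,-\, C_0,
\]
where $C_0=\tfrac{1}{h}\int_{\Omega}B(u^{h}_M(t-h))\,d\mathbf{x}+\tfrac{\beta}{2}|\Omega|$ depends only on the data at the previous time step.

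The hypothesis $h\le 1/\beta$ is now used exactly to make $\tfrac{1}{h}-\tfrac{\beta}{2}\ge \tfrac{\beta}{2}\ge 0$; since $B\ge 0$ (it is the Legendre transform of a convex primitive of $b$), the first term is nonnegative and may be discarded. In the finite-dimensional space $V_M(\Omega)\subset H^2_0(\Omega)$, all norms are equivalent, so there is a constant $c_M>0$ with $\gamma\|\Delta u\|_{L^2}^2+\tfrac12\|\nabla u\|_{L^2}^2\ge c_M|\alpha|^2$. Choosing $r>0$ so that $c_M r^2\ge C_0$ yields $\mathbf{v}(\alpha)\cdot\alpha\ge 0$ on $|\alpha|=r$, and Lemma \ref{lem:Bvectorfield} delivers a zero $\alpha^*\in\overline{B(0,r)}$; the corresponding $u^{h}_M(t)=\sum_i\alpha^*_i w_i$ solves \eqref{eq:approx.Eq}.

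The main obstacle is the advective, non-variational term $\int_{\Omega}K_f(b(u))\mathbf{e}_3\!\cdot\!\nabla w_i\,d\mathbf{x}$, which prevents direct monotonicity-type arguments; controlling it requires both the quadratic growth hypothesis on $K_f^2$ in terms of $B$ and the step-size restriction $h\le 1/\beta$ in order to absorb the resulting $B(u)$ contribution into the discrete time-derivative estimate. Once this absorption succeeds, the bi-Laplacian term supplies the coercivity needed to close the argument on a sufficiently large ball.
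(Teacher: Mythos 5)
Your proposal is correct and mirrors the paper's argument step by step: define the vector field on $\mathbb{R}^M$, apply Lemma \ref{lem:Bvectorfield}, bound the discrete time-derivative term from below via the second inequality of Lemma \ref{lem:Bproperties}, absorb the advective term using Young's inequality plus the growth hypothesis in Assumption \ref{ass:fourth-order}(4), and use $h\le 1/\beta$ to discard the $B$-contribution. The only cosmetic difference is in the closing coercivity step: you invoke equivalence of norms on the finite-dimensional subspace $V_M(\Omega)$ to obtain $\tfrac12\|\nabla u\|_{L^2}^2+\gamma\|\Delta u\|_{L^2}^2\ge c_M|\alpha|^2$, whereas the paper deduces the analogous bound directly from the orthonormality of the basis $\{w_i\}$ in $H_0^2(\Omega)$; both routes work and yield the same conclusion.
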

\begin{proof}
We define the vector field $\textbf{f}:\mathbb{R}^{M}\rightarrow\mathbb{R}^{M}$ such that $\textbf{f}=(f_1,\cdots,f_M)$ and the vector $\alpha_M^h=(\alpha_{M1}^h,\cdots,\alpha_{MM}^h)$ of the unknown coefficients of $u^h_M(t)$ in equation \eqref{eq:discretesolution}. Then, we have 
	\begin{align}
	f_i(\alpha_M^h):=&~\frac{1}{h}\int_{\Omega}\left(b(u^h_M(t))-b(u^h_M(t-h)) \right)w_i\,d\textbf{x}-\int_{\Omega}K_f\big(b(u^h_M(t))\big)\textbf{e}_3\cdot\nabla w_i\,d\textbf{x}\nonumber\\&+\int_{\Omega}\left(\nabla u^h_M(t)\cdot\nabla w_i+\gamma\Delta u^h_M(t)\Delta w_i\right)\,d\textbf{x},
	\label{eq:premitive}
	\end{align}
for all $ i=1,\cdots,M$. Note here that $u^h_M(t-h)$ for $t\in(0,h]$ is well-defined by the choice of the initial condition \eqref{eq:approx.IC}. Using Assumption \ref{ass:fourth-order}(3) and \ref{ass:fourth-order}(4), the vector field $\textbf{f}$ is continuous. Moreover, we have
	\begin{align}
	\textbf{f}(\alpha_M^h)\cdot\alpha_M^h=&~\dfrac{1}{h}\int_{\Omega}\left(b(u^h_M(t))-b(u^h_M(t-h)\right)u^h_M(t)\,d\textbf{x}-\int_{\Omega}K_f\big(b(u^h_M(t)\big)\textbf{e}_3\cdot\nabla u^h_M\,d\textbf{x}\nonumber\\
	&+ \int_{\Omega}\nabla u^h_M\cdot\nabla u^h_M\,d\textbf{x}+\gamma\int_{\Omega}\Delta u^h_M\Delta u^h_M\,d\textbf{x}.\nonumber
	\end{align}
Applying Lemma \ref{lem:Bproperties} on the first term of the right side and Cauchy's inequality on the second term yield
	\begin{align*}
	\textbf{f}(\alpha_M^h)\cdot\alpha_M^h \geq&~\dfrac{1}{h}\int_{\Omega}\left(B(u^h_M(t))-B(u^h_M(t-h))\right)\,d\textbf{x}- \dfrac{1}{2}\int_{\Omega}\left(K_f(b(u^h_M(t))\right)^2\,d\textbf{x}\\ &+\frac{1}{2}\int_{\Omega}\vert\nabla u^h_M\vert^{2}\,d\textbf{x}+\gamma\int_{\Omega}(\Delta u^h_M)^{2}\,d\textbf{x}.\nonumber
	\end{align*}
	The growth condition in Assumption \ref{ass:fourth-order}(4), equation \eqref{eq:discretesolution}, and the orthonormality of the basis functions $w_i,\,i=1,\cdots,M$, imply that
	\begin{align*}
	\textbf{f}(\alpha_M^h)\cdot\alpha_M^h\geq&~ \dfrac{1}{h}\int_{\Omega}\left(B(u^h_M(t))-B(u^h_M(t-h))\right)\,d\textbf{x}-\beta\int_{\Omega}\left(1+B(u^h_M(t))\right)\,d\textbf{x}\\
	&+\int_{\Omega}\left( \dfrac{1}{2}\left| \sum_i^M\alpha_{Mi}^h \nabla w_i \right|^{2}+ \dfrac{\gamma}{2}\left(\sum_i^M\alpha_{Mi}^h \Delta w_i \right)^{2}\right)\,d\textbf{x},\\\geq& ~\left(\dfrac{1}{h}-\beta\right)\int_{\Omega}B(u^h_M(t))\,d\textbf{x}-\left(\beta|\Omega|+\dfrac{1}{h}\int_{\Omega}B\left(u^h_M(t-h)\right)\,d\textbf{x}\right)\\&+\left(\dfrac{1}{2}+\dfrac{\gamma}{2}\right)\left|\alpha_M^h \right|^2.
	\end{align*}
	The first term of the right side of above inequality is nonnegative using condition \eqref{eq:ass-delta t}. Noting that $u^h_M(t-h)\in V_M(\Omega)$ is known and setting $r=|\alpha_M^h(t)|$ yields that $\textbf{f}(\alpha_M^h(t))\cdot\alpha_M^h(t)\geq 0$ provided that $r$ is large enough. Thus, Lemma \ref{lem:Bvectorfield} implies the existence of a vector $\alpha_M^h(t)\in\mathbb{R}^M$ satisfying $\textbf{f}(\alpha_M^h)=0$. Now, using \eqref{eq:premitive}, we obtain the existence of a function $u^h_M(t)$ that satisfies the discrete equation \eqref{eq:approx.Eq}.
\end{proof}

\subsection{A Priori Estimates}
\label{sec:4th-a priori}
We proved already the existence of a sequence $\{S_M^h\}_{M}\in\mathbb{N},h>0\subset V_M(\Omega)$ of discrete solution of the discrete problem \eqref{eq:approx.Eq} and \eqref{eq:approx.IC}. In the following, we prove a set of a priori estimates on the sequence that are essential for the convergence analysis in the next subsection.
\begin{lemma}
There exists a constant $c>0$ such that \emph{ 
\begin{align*}
\emph{ess}\sup_{t\in[0,T]}\int_{\Omega}(B(u^h_M(t))\,d\textbf{x}+\int_{0}^{T}\int_{\Omega} \vert\nabla u^h_M\vert^{2}+\gamma( \Delta u^h_M)^{2}\,d\textbf{x}\,dt \leq c\int_{\Omega}B(u_M^0)\,d\textbf{x},
\end{align*} }
for all $h>0$ and $M\in\mathbb{N}$.
\label{lem:apriori1}
\end{lemma}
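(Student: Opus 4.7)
The plan is to test the discrete equation (\ref{eq:approx.Eq}) with $u^h_M(t)$ itself, i.e., to multiply the $i$-th equation by $\alpha^h_{Mi}(t)$ and sum over $i=1,\dots,M$. Since the preceding lemma already establishes $\textbf{f}(\alpha_M^h)=0$, this yields the discrete energy identity
\begin{equation*}
\tfrac{1}{h}\int_\Omega \bigl(b(u^h_M(t))-b(u^h_M(t-h))\bigr)u^h_M(t)\,d\textbf{x}+\int_\Omega |\nabla u^h_M(t)|^2+\gamma(\Delta u^h_M(t))^2\,d\textbf{x}=\int_\Omega K_f(b(u^h_M(t)))\,\textbf{e}_3\cdot\nabla u^h_M(t)\,d\textbf{x}.
\end{equation*}
Lemma \ref{lem:Bproperties} bounds the left-most term from below by $h^{-1}\int_\Omega\bigl(B(u^h_M(t))-B(u^h_M(t-h))\bigr)\,d\textbf{x}$; Cauchy's inequality splits the right-hand side into $\tfrac12\int_\Omega K_f^2\,d\textbf{x}+\tfrac12\int_\Omega|\nabla u^h_M|^2\,d\textbf{x}$; and the growth condition in Assumption \ref{ass:fourth-order}(4) bounds $K_f^2$ by $\beta(1+B(u^h_M(t)))$.

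Multiplying through by $h$ and collecting terms gives the per-step inequality
\begin{equation*}
\bigl(1-\tfrac{h\beta}{2}\bigr)\int_\Omega B(u^h_M(t))\,d\textbf{x}+\tfrac{h}{2}\int_\Omega|\nabla u^h_M|^2\,d\textbf{x}+h\gamma\int_\Omega(\Delta u^h_M)^2\,d\textbf{x}\leq \int_\Omega B(u^h_M(t-h))\,d\textbf{x}+\tfrac{h\beta}{2}|\Omega|.
\end{equation*}
The timestep restriction (\ref{eq:ass-delta t}) ensures $1-h\beta/2\geq 1/2$, so writing $a_k:=\int_\Omega B(u^h_M(kh))\,d\textbf{x}$ the recursion reads $a_k\leq(1+Ch)a_{k-1}+Ch|\Omega|$. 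Iterating over $k=1,\dots,N$ (discrete Gronwall) gives $\max_{k\leq N} a_k\leq e^{CT}(a_0+|\Omega|)$, which is the essential-supremum bound. Summing the per-step inequality telescopically in $k$ then controls the full space-time integral $\int_0^T\int_\Omega|\nabla u^h_M|^2+\gamma(\Delta u^h_M)^2\,d\textbf{x}\,dt$ by $a_0$ plus a multiple of $\max_k a_k+|\Omega|$; substituting the Gronwall bound yields the stated estimate, with the constant $c$ depending only on $T$, $\beta$, $\gamma$, and $|\Omega|$.

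The only non-routine point is the nonlinear drift: when tested with $u^h_M$, the quantity $\int_\Omega K_f\,\textbf{e}_3\cdot\nabla u^h_M\,d\textbf{x}$ cannot be absorbed into the dissipative terms by gradient alone, since $K_f(b(u^h_M))$ carries no intrinsic smallness. Assumption \ref{ass:fourth-order}(4) is the precise structural ingredient that closes the estimate, transferring the pointwise size of $K_f^2$ into the energy quantity $B(u^h_M)$ already being tracked; the timestep restriction $h\leq 1/\beta$ is the compatibility requirement that keeps the Gronwall amplification factor $(1-h\beta/2)^{-1}$ finite.
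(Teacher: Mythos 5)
Your proposal is correct and takes essentially the same route as the paper: test the discrete equation with $u^h_M$, invoke Lemma \ref{lem:Bproperties} for the time-difference term, Cauchy's inequality for the drift term, the growth condition from Assumption \ref{ass:fourth-order}(4) to turn $K_f^2$ into $B$, and Gronwall. The only cosmetic difference is that you run discrete Gronwall on the per-step recursion $a_k\leq (1+Ch)a_{k-1}+Ch|\Omega|$, while the paper integrates from $0$ to $\tau$ (exploiting that $u^h_M$ is a step function in time) and applies continuous Gronwall; these are two presentations of the same estimate.
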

\begin{proof}
Multiplying equation \eqref{eq:approx.Eq} by $\alpha_{Mi}^h$, summing for $i=1,\cdots,M$, and then integrating from $0$ to an arbitrary time $\tau\in[0,T]$ yields
\begin{align}
 \dfrac{1}{h} \int_{0}^{\tau} \int_{\Omega} &\left(b(u^h_M(t))-b(u^h_M(t-h)) \right) u^h_M(t)\,d\textbf{x}\,dt+\int_{0}^{\tau}\int_{\Omega}|\nabla u^h_M|^2\,d\textbf{x}\,dt\nonumber\\&+\gamma\int_{0}^{\tau}\int_{\Omega}(\Delta u^h_M)^2\,d\textbf{x}\,dt = \int_{0}^{\tau}\int_{\Omega}K_f\big(b(u^h_M\big)\textbf{e}_3\cdot \nabla u^h_M \,d\textbf{x}\,dt.
 \label{eq:apriori1-1}
\end{align}
Applying the first inequality in Lemma \ref{lem:Bproperties} to the first term on the left side of equation \eqref{eq:apriori1-1} and Cauchy's inequality to the right side yield
\begin{align*}
\dfrac{1}{h} \int_{0}^{\tau}\int_{\Omega} \left(B(u^h_M(t))\,-\right.&\left. B(u^h_M(t-h)\right)\,d\textbf{x}\,dt+ \int_{0}^{\tau}\int_{\Omega}|\nabla u^h_M|^2+\gamma(\Delta u^h_M)^2\,d\textbf{x}\,dt\nonumber\\\leq & ~\dfrac{1}{2}\int_{0}^{\tau}\int_{\Omega}\left(K_f(b(u^h_M)\right)^2 \,d\textbf{x}\,dt+\dfrac{1}{2}\int_{0}^{\tau}\int_{\Omega}|\nabla u^h_M|^2\,d\textbf{x}\,dt.\nonumber
\end{align*}	
Applying the growth condition in Assumption \ref{ass:fourth-order}(4) to the first term on the right side of the above equation gives
	\begin{align*}
	\dfrac{1}{h} \int_{0}^{\tau}\int_{\Omega} \left(B(u^h_M(t))-\,\right.&\left. B(u^h_M(t-h))\right)\,d\textbf{x}\,dt+ \frac{1}{2 }\int_{0}^{\tau}\int_{\Omega}|\nabla u^h_M|^2\,d\textbf{x}\,dt\nonumber \\ &+\gamma\int_{0}^{\tau}\int_{\Omega}(\Delta u^h_M)^2\,d\textbf{x}\,dt \leq \beta \int_{0}^{\tau}\int_{\Omega}\left(1+B(u^h_M(t))\right)\,d\textbf{x}\,dt.
	 \end{align*}
Applying summation by parts to the first term on the left side of the above equation, and noting that $u^h_M$ is a step function in time, leads to
	\begin{align}
	\int_{\Omega}B(u^h_M(\tau))\,d\textbf{x}+\dfrac{1}{2}\int_{0}^{\tau}\int_{\Omega} \vert\nabla u^h_M\vert^{2}&\,d\textbf{x}\,dt +\gamma\int_{0}^{\tau} \int_{\Omega}\vert\Delta u^h_M\vert^{2}\,d\textbf{x}\,dt\nonumber\\ \leq & \beta|\Omega|T +\int_{\Omega}B(u_M^0)\,d\textbf{x} +\beta\int_{0}^{\tau}\int_{\Omega}B(u^h_M(t))\,d\textbf{x}\,dt.\nonumber
	\end{align}
	Note that $B(u^h_M)$ is nonnegative and summable on $[0,T]$, where the summability results from substituting $z_0=0$ into the second inequality in Lemma \ref{lem:Bproperties}, the boundedness of $b$, and the choice that the coefficients $\alpha_{M,i}^h\in L^{\infty}((0,T))$. Hence, Gronwall's inequality is applicable and implies the existence of a constant $c>0$ depending on $\beta,\,|\Omega|$, and $T$ such that
	\begin{equation*}
	\emph{ess}\sup_{t\in[0,T]}\int_{\Omega}\left(B(u^h_M(t)\right)\,d\textbf{x}+\int_{0}^{T}\int_{\Omega}\left(\dfrac{1}{2} \vert\nabla u^h_M\vert^{2}+\gamma( \Delta u^h_M)^{2}\right)\,d\textbf{x}\,dt \leq c\int_{\Omega}B(u_M^0)\,d\textbf{x}.
	\end{equation*}
	\end{proof}
\begin{corollary}
It holds that $ u_M^h\in L^{2}(0,T;H_0^2(\Omega))$ for all $M\in\mathbb{N}$ and $h>0$.
\label{cor:poincare}
\end{corollary}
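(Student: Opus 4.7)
The plan is to read off the bound directly from Lemma \ref{lem:apriori1} and then upgrade the $L^2$ bounds on $\nabla u_M^h$ and $\Delta u_M^h$ to a full $H^2$ bound by applying the Poincaré inequality twice, which is exactly what the label of the corollary suggests.

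First I would observe that, by construction, $u_M^h(\cdot,t)\in V_M(\Omega)\subset H_0^2(\Omega)$ for almost every $t\in(0,T)$, so the trace conditions $u_M^h=0$ and $\nabla u_M^h\cdot\mathbf{n}=0$ hold on $\partial\Omega$. The standard Poincaré inequality gives $\|u_M^h(\cdot,t)\|_{L^2(\Omega)}\leq C_\Omega\|\nabla u_M^h(\cdot,t)\|_{L^2(\Omega)}$. An integration by parts combined with Cauchy--Schwarz yields
\begin{equation*}
\|\nabla u_M^h(\cdot,t)\|_{L^2(\Omega)}^2=-\int_\Omega u_M^h(\cdot,t)\,\Delta u_M^h(\cdot,t)\,d\mathbf{x}\leq \|u_M^h(\cdot,t)\|_{L^2(\Omega)}\|\Delta u_M^h(\cdot,t)\|_{L^2(\Omega)},
\end{equation*}
so together one obtains $\|\nabla u_M^h(\cdot,t)\|_{L^2(\Omega)}\leq C_\Omega\|\Delta u_M^h(\cdot,t)\|_{L^2(\Omega)}$ and hence also $\|u_M^h(\cdot,t)\|_{L^2(\Omega)}\leq C_\Omega^2\|\Delta u_M^h(\cdot,t)\|_{L^2(\Omega)}$ for a.e. $t$.

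Second, I would integrate in time and use Lemma \ref{lem:apriori1}. The term $\gamma\int_0^T\int_\Omega(\Delta u_M^h)^2\,d\mathbf{x}\,dt$ is controlled by $c\int_\Omega B(u_M^0)\,d\mathbf{x}$, which is finite and uniformly bounded in $M$ by Assumption \ref{ass:fourth-order}(2). Combining this with the two Poincaré estimates above produces
\begin{equation*}
\|u_M^h\|_{L^2(0,T;H_0^2(\Omega))}^2=\int_0^T\!\!\left(\|u_M^h\|_{L^2(\Omega)}^2+\|\nabla u_M^h\|_{L^2(\Omega)}^2+\|\Delta u_M^h\|_{L^2(\Omega)}^2\right)\,dt<\infty,
\end{equation*}
which is the claim.

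I do not anticipate a real obstacle here: the corollary is essentially a reformulation of the $H^2$ content already contained in the previous lemma, the only ingredient needed beyond the a priori estimate being the $H_0^2$ Poincaré inequality (valid on bounded domains with smooth boundary, as in Assumption \ref{ass:fourth-order}(1)). The mildly delicate point is ensuring that the constants in the Poincaré chain depend only on $\Omega$ and not on $M$ or $h$; since $V_M(\Omega)\subset H_0^2(\Omega)$ this is automatic.
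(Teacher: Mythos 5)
Your Poincar\'e chain for $\|u_M^h\|_{L^2(\Omega)}$ and $\|\nabla u_M^h\|_{L^2(\Omega)}$ in terms of $\|\Delta u_M^h\|_{L^2(\Omega)}$ is sound and uses the same trick (double application of integration by parts together with the zero boundary data) as the paper. However, there is a genuine gap at the very end: you write
\begin{equation*}
\|u_M^h\|_{L^2(0,T;H_0^2(\Omega))}^2=\int_0^T\left(\|u_M^h\|_{L^2}^2+\|\nabla u_M^h\|_{L^2}^2+\|\Delta u_M^h\|_{L^2}^2\right)\,dt,
\end{equation*}
but the $H^2$ norm as this paper (and the standard definition) uses it involves the full Hessian $|D^2 u|^2 = \sum_{i,j}(\partial_{x_ix_j}u)^2$, not the Laplacian $(\Delta u)^2$. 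The mixed second derivatives $\partial_{x_ix_j}u$ with $i\neq j$ are controlled neither by your Poincar\'e chain nor by the $\Delta u$ bound from Lemma~\ref{lem:apriori1} without a further argument.

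This further argument is precisely the content of the paper's own proof: one shows, via two integrations by parts that use the regularity of the Galerkin basis (or, equivalently, by density of $C_c^\infty(\Omega)$ in $H_0^2(\Omega)$), that
\begin{equation*}
\int_\Omega(\partial_{x_ix_j}u_M^h)^2\,d\textbf{x}=\int_\Omega \partial_{x_ix_i}u_M^h\,\partial_{x_jx_j}u_M^h\,d\textbf{x}\leq \tfrac12\int_\Omega(\partial_{x_ix_i}u_M^h)^2+(\partial_{x_jx_j}u_M^h)^2\,d\textbf{x},
\end{equation*}
whence each $D^\sigma u_M^h$ with $|\sigma|=2$ is controlled by $\|\Delta u_M^h\|_{L^2}$. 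The paper takes some care in justifying the intermediate third-order derivatives by choosing basis functions in $C^3(\Omega)$ (using $\partial\Omega\in C^5$); you could avoid this by the density argument since $u_M^h(\cdot,t)\in H_0^2(\Omega)$, but either way the step must be made explicit. Without it, your final display does not establish membership in $L^2(0,T;H_0^2(\Omega))$ as defined.
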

\begin{proof}
Lemma \ref{lem:apriori1} and Poincar\'{e}'s inequality imply the existence of a constant $C>0$ such that
\begin{align*}
 \int_0^{T}\int_{\Omega} (u^h_M)^2\,d\textbf{x}\,dt \leq C,
\end{align*}
for all $M\in\mathbb{N}$ and $h>0$. Moreover, the biharmonic operator $Lu:\Delta u+ \Delta^2 u$ can be written as a combination of two second-order elliptic operators 
\begin{align*}
 L_1 w=&\Delta w+w,\\
 L_2 u=&\Delta u.
\end{align*}
Hence, the basis functions $w_i$ of the biharmonic operator $L$ can be chosen as a combination of the eigenfunctions of the operators $L_1$ and $L_2$. These eigenfunctions belong to the space $C^{3}(\Omega)$, whenever the boundary $\partial \Omega \in C^{5}$, \cite{Evans}. Hence, using Gauss' theorem and Cauchy's inequality, we obtain
	\begin{align*}
 	\int_{0}^{T}\int_{\Omega}(\partial_{x_ix_j}u^h_M)^{2}\,d\textbf{x}\,dt=&
 	-\int_{0}^{T}\int_{\Omega}\partial_{x_i}u^h_{M} \partial_{x_ix_jx_j}u^h_M\,d\textbf{x}\,dt=
 	\int_{0}^{T}\int_{\Omega}\partial_{x_ix_i}u^h_M \partial_{x_jx_j}u^h_M \,d\textbf{x}\,dt\\ \leq& \,\frac{1}{2}\int_{0}^{T}\int_{\Omega}(\partial_{x_ix_i}u^h_M )^2 \,d\textbf{x}\,dt+\frac{1}{2}\int_{0}^{T}\int_{\Omega}(\partial_{x_jx_j}u^h_M)^2 \,d\textbf{x}\,dt\\=&\,\frac{1}{2}\Vert \Delta u^h_M\Vert_{L^2(\Omega\times(0,T))}.
	\end{align*}
for all $i,\,j\in\{1,\cdots, d\}$. Thus we have  $D^{\sigma} u \in L^{2}(\Omega\times(0,T))$ for all index vectors $\sigma\in\mathbb{N}\times\mathbb{N}\times\mathbb{N}$ with $|\sigma|=2$ . 
\end{proof}
In the following lemma, we prove an a priori estimate on the backward difference quotient $\tfrac{b(u^h_M(t))-b(u^h_M(t-h))}{h}$.
\begin{lemma}
There exists a constant $c>0$ such that\emph{
\begin{align*}
\frac{1}{h}\int_{0 }^{T}\int_{\Omega}\left(b(u^h_M(t))-b(u^h_M(t-h))\right)\phi\,d\textbf{x}\,dt\leq c,
\end{align*}}
for any $\phi\in L^2(0,T;V_M(\Omega))$, $M\in\mathbb{N}$ and $h>0$.
\label{lem:apriori2}
\end{lemma}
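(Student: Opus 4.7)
The plan is to read off the difference quotient directly from the discrete equation \eqref{eq:approx.Eq} and then estimate the right-hand side using Lemma \ref{lem:apriori1} and the growth condition in Assumption \ref{ass:fourth-order}(4). Since $\phi \in L^2(0,T;V_M(\Omega))$, for a.e.\ $t$ we may expand $\phi(\cdot,t) = \sum_{i=1}^M \beta_i(t) w_i$; multiplying \eqref{eq:approx.Eq} by $\beta_i(t)$ and summing over $i$ gives, for a.e.\ $t\in[0,T]$,
\begin{equation*}
\frac{1}{h}\int_\Omega \bigl(b(u^h_M(t))-b(u^h_M(t-h))\bigr)\phi\,d\mathbf{x} = \int_\Omega K_f(b(u^h_M))\,\mathbf{e}_3\cdot\nabla\phi \,d\mathbf{x} - \int_\Omega \nabla u^h_M\cdot\nabla\phi\,d\mathbf{x} - \gamma\int_\Omega \Delta u^h_M\,\Delta\phi\,d\mathbf{x}.
\end{equation*}

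Next I would integrate this identity over $[0,T]$ and apply the Cauchy--Schwarz inequality in space and in time to each of the three terms on the right. This yields an upper bound of the form
\begin{equation*}
\bigl(\|K_f(b(u^h_M))\|_{L^2(\Omega_T)} + \|\nabla u^h_M\|_{L^2(\Omega_T)} + \gamma\,\|\Delta u^h_M\|_{L^2(\Omega_T)}\bigr)\,\|\phi\|_{L^2(0,T;H^2_0(\Omega))},
\end{equation*}
where $\Omega_T := \Omega\times(0,T)$. The last two factors are controlled by Lemma \ref{lem:apriori1}, while the first is handled by the growth condition $K_f(b(z))^2 \leq \beta(1+B(z))$ in Assumption \ref{ass:fourth-order}(4), which together with the essential supremum bound on $\int_\Omega B(u^h_M)\,d\mathbf{x}$ from Lemma \ref{lem:apriori1} yields
\begin{equation*}
\int_0^T\!\!\int_\Omega K_f(b(u^h_M))^2\,d\mathbf{x}\,dt \leq \beta\bigl(|\Omega|T + T\,\mathrm{ess\,sup}_{t}\!\int_\Omega B(u^h_M(t))\,d\mathbf{x}\bigr) \leq c.
\end{equation*}

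Putting these ingredients together gives a bound of the claimed form, with a constant $c$ that depends on $\beta$, $\gamma$, $|\Omega|$, $T$, and $\int_\Omega B(u^0_M)\,d\mathbf{x}$ (and, implicitly, on the $L^2(0,T;H^2_0(\Omega))$-norm of the test function $\phi$), but is independent of $h$ and $M$. None of the steps looks subtle: the main point is simply to recognize that the Galerkin equation lets us trade the discrete time derivative of $b(u^h_M)$ for spatial derivatives whose norms we already control. The only mild technicality is ensuring that $\phi(\cdot,t)$ is admissible as a sum of the basis functions $w_i$ in \eqref{eq:approx.Eq}, which is immediate since $\phi(\cdot,t)\in V_M(\Omega)$ by hypothesis.
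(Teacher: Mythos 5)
Your proof is correct and follows essentially the same route as the paper: express $\phi$ in the orthonormal Galerkin basis, multiply the discrete equation \eqref{eq:approx.Eq} by the coefficients and sum, integrate in time, and then bound the resulting right-hand side using Cauchy's inequality, the growth condition from Assumption \ref{ass:fourth-order}(4), and the a priori estimate of Lemma \ref{lem:apriori1}. You also make explicit, as is implicit in the paper's argument, that the constant $c$ depends on the $L^2(0,T;H_0^2(\Omega))$ norm of the test function $\phi$.
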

\begin{proof}
Let $m\leq M$ be a positive integer and choose a function $\phi\in L^{\infty}(0,T;H_0^2(\Omega))$ such that for almost all $t\in[0,T]$
\begin{align}
 \phi(t)=\sum_{i=1}^m \alpha_{Mi}^h(t)w_i,
\end{align}
where $\alpha_{Mi}^h\in L^{\infty}((0,T))$, $i=1,\cdots,m$, are given functions and $w_i\in H_0^2(\Omega)$, $i=1,\cdots,m$, belong to the orthonormal basis of the subspace $V_M(\Omega)$. Multiplying equation \eqref{eq:approx.Eq} by $\alpha_{Mi}^h(t)$, summing for $i=1,...,M$, and then integrating from $0$ to $T$ yields  
\begin{align*}
	\dfrac{1}{h}\int_{0}^{T}\int_{\Omega}\left(b(u^h_M(t))- b(u^h_M(t-h))\right)\phi(t)\,d\textbf{x}\,dt=&\int_{0}^{T}\int_{\Omega}K_f(b(u^h_M(t))\textbf{e}_3\cdot \nabla \phi(t) \,d\textbf{x}\,dt\\&- \int_{0}^{T}\int_{\Omega}\nabla u^h_M(t)\cdot \nabla \phi(t)\,d\textbf{x}\,dt \\&-\gamma\int_{0}^{T}\int_{\Omega}\Delta u^h_M(t) \Delta \phi(t)\,d\textbf{x}\,dt.
	\end{align*}
Applying Cauchy's inequality on the terms on the right side of the above equation then using the growth condition in Assumption \ref{ass:fourth-order}(4) gives
	\begin{align*}
	 \dfrac{1}{h}&\int_{0}^{T}\int_{\Omega} \left(b(u^h_M(t))- b(u^h_M(t-h))\right)\phi(t)\,d\textbf{x}\,dt \\ \leq & \dfrac{1}{2}\int_{0}^{T}\int_{\Omega} \left(K(b(u^h_M(t))\right)^2\,d\textbf{x}\,dt+ \int_{0}^{T}\int_{\Omega}|\nabla \phi(t)|^2\,d\textbf{x}\,dt+ \dfrac{1}{2}\int_{0}^{T}\int_{\Omega}|\nabla u^h_M|^2\,d\textbf{x}\,dt\\ &+ \dfrac{\gamma}{2}\int_{0}^{T}\int_{\Omega}\Delta\phi(t)^2\,d\textbf{x}\,dt+ \dfrac{\gamma}{2}\int_{0}^{T}\int_{\Omega}(\Delta u^h_M)^2\,d\textbf{x}\,dt\\ \leq&  \dfrac{\beta}{2}\int_{0}^{T}\int_{\Omega}\left(1+B(u^h_M(t)\right)\,d\textbf{x}\,dt+ \int_{0}^{T}\int_{\Omega}|\nabla \phi(t)|^2\,d\textbf{x}\,dt+\dfrac{1}{2}\int_{0}^{T}\int_{\Omega}|\nabla u^h_M|^2\,d\textbf{x}\,dt\\ &+ \dfrac{\gamma}{2}\int_{0}^{T}\int_{\Omega}\Delta\phi(t)^2\,d\textbf{x}\,dt+\dfrac{\gamma}{2}\int_{0}^{T}\int_{\Omega}(\Delta u^{h}_{
M})^2\,d\textbf{x}\,dt.
	\end{align*}
Then, Lemma \ref{lem:apriori1} and the choice that $\phi\in L^{\infty}(0,T;H_0^1(\Omega))$ implies the existence of a constant $c>0$ such that 
	\begin{equation*}
	 \dfrac{1}{h}\int_{0}^{T}\int_{\Omega}\left(b(u^h_M(t)-b(u^h_M(t-h))\right)\phi(t)\,d\textbf{x}\,dt\leq c.
	\end{equation*}
\end{proof}

\begin{corollary}
There exist constants $\delta_0,\,c>0$ such that\emph{
\begin{align*}
\frac{1}{\delta}\int_{\delta }^{T}\int_{\Omega}\left(b(u^h_M(t))-b(u^h_M(t-\delta))\right)\left(u^h_M(t)-u^h_M(t-\delta)\right)\,d\textbf{x}\,dt\leq c,
\end{align*}}
for any $M\in\mathbb{N}$, $h>0$ and $\delta\in(0,\delta_0)$.
\label{cor:apriori2}
\end{corollary}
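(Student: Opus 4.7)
The plan is to exploit the piecewise-constant-in-time structure of $u^h_M$ (step size $h$, as used in the proof of Lemma \ref{lem:apriori1}) together with a telescoping argument to reduce the $\delta$-difference to a sum of single $h$-step increments that are controlled by Lemma \ref{lem:apriori2}. The natural test function to feed into that lemma is $\phi(t):=u^h_M(t)-u^h_M(t-\delta)$, which lies in $L^2(0,T;V_M(\Omega))$ with norm uniformly bounded in $h$, $M$, $\delta$ thanks to Corollary \ref{cor:poincare}.

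I would first treat the case $\delta=Kh$ with $K\in\mathbb{N}$. The telescoping identity
\begin{equation*}
b(u^h_M(t))-b(u^h_M(t-\delta))=\sum_{j=0}^{K-1}\bigl(b(u^h_M(t-jh))-b(u^h_M(t-(j+1)h))\bigr)
\end{equation*}
splits the desired integral into $K$ pieces. In the $j$-th piece, the change of variables $s=t-jh$ together with the shifted test function $\phi_j(s):=u^h_M(s+jh)-u^h_M(s+jh-\delta)$ (extended by zero outside its natural domain) puts it into the form handled by Lemma \ref{lem:apriori2}. Since $\|\phi_j\|_{L^2(0,T;V_M(\Omega))}\le 2\|u^h_M\|_{L^2(0,T;V_M(\Omega))}$ is uniformly bounded by Corollary \ref{cor:poincare}, each piece is controlled by $c\,h$; summing over $j=0,\dots,K-1$ yields a bound of $K\,h\,c=\delta\,c$, which after dividing by $\delta$ gives the claim.

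For a general $\delta\in(0,\delta_0)$ one writes $\delta=Kh+\rho$ with $K\in\mathbb{N}_0$ and $\rho\in[0,h)$ and handles the remaining fractional step via the step-function observation: on each interval $[nh,(n+1)h)$ the function $u^h_M(\cdot)-u^h_M(\cdot-\rho)$ vanishes except on a subinterval of length $\rho$, on which it equals the single jump $u^h_M(nh)-u^h_M((n-1)h)$; this contribution is again controlled by Lemma \ref{lem:apriori2} applied with $\phi(t)=u^h_M(t)-u^h_M(t-h)$ and is bounded by $\rho\,c\le\delta\,c$. The main obstacle is the careful bookkeeping of the boundary contributions near $t=\delta$ and $t=T$ in the change of variables, and the verification that the final constant depends only on $\beta$, $|\Omega|$, $T$, $\gamma$ and the initial data (through Lemma \ref{lem:apriori1} and Corollary \ref{cor:poincare}) rather than on $h$, $M$, or $\delta$; the threshold $\delta_0$ is chosen precisely so that these endpoint effects remain uniformly controlled.
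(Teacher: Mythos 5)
Your proof is correct and rests on the same pillars as the paper's: Lemma~\ref{lem:apriori2} applied to test functions built from time-shifts of $u^h_M$, the uniform bound from Corollary~\ref{cor:poincare}, and the piecewise-constant-in-time structure of $u^h_M$. Your explicit telescoping into $K=\lfloor \delta/h \rfloor$ single-step increments is in fact a more complete rendering than the paper's terse ``noting that $u^h_M$ is a step function in time\ldots for any $\delta>0$ such that $\vert\delta-h\vert$ is small enough,'' since it visibly covers the regime of fixed $\delta$ with $h\to 0$, which is precisely what an application of Proposition~\ref{prop:Alt1} requires.
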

\begin{proof}
Choosing $\phi=u^h_M(t)-u^h_M(t-h)$ in Lemma \ref{lem:apriori2} yields 
\begin{align*}
\frac{1}{h}\int_{0 }^{T}\int_{\Omega}\left(b(u^h_M(t))-b(u^h_M(t-h))\right)\left(u^h_M(t)-u^h_M(t-h)\right)\,d\textbf{x}\,dt\leq c.
\end{align*}
Noting that $u^h_M$ is a step function in time, we obtain
	\begin{eqnarray*}
	\frac{1}{\delta}\int_{0 }^{T}\int_{\Omega}\left(b(u^h_M(t))-b(u^h_M(t-\delta))\right)\left(u^h_M(t)-u^h_M(t-\delta)\right)\,d\textbf{x}\,dt\leq c,
	\end{eqnarray*}
for any $\delta>0$ such that $\vert \delta - h \vert$ is small enough.
	\end{proof}

\subsection{Convergence Results}
\label{sec:4th-convergence}
In this subsection, we show the convergence of the sequence $\{u^h_M\}_{M\in\mathbb{N},h>0}$ of discrete solutions of equation \eqref{eq:approx.Eq} to a weak solution of the transformed fourth-order problem \eqref{eq:linear1} and \eqref{eq:IBC}. This result is summarized in Theorem \ref{thm:existence}. The proof of the theorem depends on the a priori estimates in Section \ref{sec:4th-a priori} and the following proposition by Alt and Luckhaus \cite{Alt}.

\begin{proposition}[Alt and Luckhaus \cite{Alt}]
Assume that $z_{\epsilon}\rightharpoonup z$ in $L^{2}(0,T;H^{1}(\Omega))$ as $\epsilon\rightarrow 0$ and there exists a constant $C>0$ such that \emph{
\begin{equation}
\dfrac{1}{\delta}\int_{0}^{T-\delta}\int_{\Omega}\bigl( b(z_{\epsilon}(t+\delta)-b(z_{\epsilon}(t)\bigr)\bigl(z_{\epsilon}(t+\delta)-z_{\epsilon}(t)\bigr)\,d\textbf{x}\,dt\leq C,
\label{eq:estimate2}
\end{equation}}
holds for any small $\delta>0$ and \emph{
\begin{align*}
 \int_{\Omega} B(z_{\epsilon}(t))\,d\textbf{x}\leq C,\quad\text{ for } 0<t<T.
\end{align*}}
Then, $ b(z_{\varepsilon})\rightarrow b(z)$ in $L^{1}(\Omega\times(0,T))$ and $B(z_{\epsilon})\rightarrow B(z)$ almost everywhere.
\label{prop:Alt1}
\end{proposition}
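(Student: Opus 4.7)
The plan is to establish relative compactness of $\{b(z_\epsilon)\}$ in $L^1(\Omega\times(0,T))$ through a Fr\'echet--Kolmogorov type criterion (space- and time-translation estimates plus uniform boundedness), extract an a.e.\ convergent subsequence, identify its limit with $b(z)$ via a Minty-type monotonicity argument, and finally transfer strong convergence to $B(z_\epsilon)$ using continuity of $B$ together with strict monotonicity of $b$ outside the saturation plateau.

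For spatial equicontinuity I would exploit the uniform bound on $\|z_\epsilon\|_{L^2(0,T;H^1(\Omega))}$ that follows from weak convergence. The standard $H^1$ difference-quotient estimate yields
\[
\int_0^T\!\!\int_\Omega |z_\epsilon(x+\xi,t)-z_\epsilon(x,t)|^2\,dx\,dt \;\leq\; C|\xi|^2
\]
(restricted to $x\in\Omega$ with $x+\xi\in\Omega$), and the Lipschitz continuity of $b$ from Assumption \ref{ass:fourth-order}(3) transfers this bound to $b(z_\epsilon)$ up to a factor of $L^2$. For time equicontinuity I would combine monotonicity and Lipschitz continuity in the form $|b(z_1)-b(z_2)|^2\leq L\bigl(b(z_1)-b(z_2)\bigr)(z_1-z_2)$; substituting $z_1=z_\epsilon(t+\delta)$, $z_2=z_\epsilon(t)$ and using the hypothesis \eqref{eq:estimate2} produces
\[
\int_0^{T-\delta}\!\!\int_\Omega |b(z_\epsilon(t+\delta))-b(z_\epsilon(t))|^2\,dx\,dt \;\leq\; LC\delta.
\]
Together with the uniform $L^\infty$-bound $b(z_\epsilon)\in(0,1]$, Fr\'echet--Kolmogorov yields relative compactness of $\{b(z_\epsilon)\}$ in $L^2(\Omega\times(0,T))$ and hence in $L^1$; a subsequence satisfies $b(z_{\epsilon_k})\to g$ in $L^2$ and a.e.

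To identify $g=b(z)$ I would invoke Minty's trick: for every $v\in L^2(0,T;H_0^1(\Omega))$, monotonicity of $b$ gives
\[
\int_0^T\!\!\int_\Omega \bigl(b(z_{\epsilon_k})-b(v)\bigr)(z_{\epsilon_k}-v)\,dx\,dt \;\geq\; 0,
\]
and the strong $L^2$-convergence of $b(z_{\epsilon_k})$ paired with weak $L^2$-convergence of $z_{\epsilon_k}$ lets each bilinear term pass to its limit, giving $\int(g-b(v))(z-v)\geq 0$. Choosing $v=z\pm\lambda w$ for arbitrary $w$ and sending $\lambda\to 0^+$ by continuity of $b$ yields $g=b(z)$ a.e. For the a.e.\ convergence of $B$, note that on $\{z\geq 0\}$ one has $b\equiv 1$ and $B\equiv 0$, so the claim holds trivially there; on $\{z<0\}$ the map $b$ is strictly increasing and $b^{-1}$ is continuous, so a.e.\ convergence $b(z_{\epsilon_k})\to b(z)$ lifts to $z_{\epsilon_k}\to z$ a.e., and continuity of $B$ closes the argument.

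The main obstacle is the Minty identification: the weak-strong pairing is only legitimate after upgrading compactness of $\{b(z_\epsilon)\}$ from $L^1$ to $L^2$. This upgrade is not automatic from Fr\'echet--Kolmogorov alone, but follows from the uniform $L^\infty$-bound $b\in(0,1]$ combined with a dominated-convergence argument applied along the $L^1$-convergent subsequence.
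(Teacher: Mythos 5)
The paper does not prove this proposition: it is imported verbatim from Alt and Luckhaus \cite{Alt} and used as a black box in the proof of Theorem \ref{thm:existence}, so there is no in-paper argument to compare against line by line. Your proof is a self-contained replacement that legitimately exploits two hypotheses the paper imposes but Alt--Luckhaus do not need, namely Lipschitz continuity of $b$ and the uniform bound $b\in(0,1]$. The spatial translation estimate from the uniform $H^1$ bound (which follows from weak convergence by uniform boundedness), the temporal estimate $|b(z_1)-b(z_2)|^2\le L\,(b(z_1)-b(z_2))(z_1-z_2)$, the Fr\'echet--Kolmogorov compactness, the $L^1\!\to\! L^2$ upgrade via boundedness of $b$, and the Minty weak--strong pairing to identify the limit are all correct.

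There is, however, a genuine gap in the last step. You assert that on $\{z<0\}$ the map $b$ is strictly increasing, so that ``a.e.\ convergence $b(z_{\epsilon_k})\to b(z)$ lifts to $z_{\epsilon_k}\to z$ a.e.'' Assumption \ref{ass:fourth-order}(3) gives only that $b$ is monotone increasing; it may be constant on subintervals of $(u_l,0)$, and on such plateaus the lift fails. The robust fix is to notice that $B$ factors through $b$: with $\Phi(z)=\int_0^z b(s)\,ds$ and $\Psi$ its Legendre conjugate, one has $B=\Psi\circ b$, and $\Psi$, being convex and finite on the range of $b$, is continuous there. Therefore a.e.\ convergence of $b(z_{\epsilon_k})$ already yields a.e.\ convergence of $B(z_{\epsilon_k})$, with no case split and no strict monotonicity needed. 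The same observation repairs your remark that the claim ``holds trivially'' on $\{z\ge 0\}$: there $B(z)=0$, but $B(z_{\epsilon_k})$ need not vanish, so one still needs $\Psi$ continuous at $v=1$ to conclude $B(z_{\epsilon_k})\to 0$. Finally, the proposition concerns the full family $\epsilon\to 0$, not merely a subsequence; since your Minty step identifies the limit uniquely as $b(z)$, the standard sub-subsequence argument closes that gap, but it should be stated explicitly.
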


Before we state and prove the first main theorem in this chapter, we remind that the Sobolev space $L^2(0,T;H_0^2(\Omega))$ and its dual $L^{2}(0,T;H^{-2}(\Omega))$ are equipped with the norms
\begin{align*}
\Vert u\Vert_{L^2(0,T;H^2(\Omega))} =& \int_0^T\int_{\Omega}\left( u^2+|\nabla u|^2+ |D^2 u|^2\right)\,d\textbf{x}\,dt,\\
\Vert L\Vert_{L^2(0,T;H^{-2}(\Omega))} =&\sup\left\{ L(u)\mid  u\in L^2(0,T;H_0^2(\Omega)),\, \Vert u\Vert_{L^2(0,T;H^2(\Omega))} \leq 1\right\}.
\end{align*}
In addition, we state Cauchy's inequality that will be repeatedly used throughout the coming sections
\begin{align}
 ab\leq \epsilon a^2 + \frac{b^2}{4\epsilon}\quad\quad \forall a,\,b\in \mathbb{R},\, \epsilon>0.
 \label{eq:cauchy}
\end{align}

\begin{theorem}
Let Assumption \ref{ass:fourth-order} be satisfied and $h\leq \tfrac{1}{\beta}$. Then, problem \eqref{eq:linear1}, \eqref{eq:IBC} has a weak solution $u\in L^2(0,T;H_0^2(\Omega))$ that satisfies
	\begin{enumerate}
	\item $K_f(b(u))\in L^{2}(\Omega\times(0,T))$, $\partial_t b(u)\in L^{2}(0,T;H^{-2}(\Omega))$, and\emph{
	\begin{equation}
	\int_{0}^{T}\int_{\Omega} \Bigl(\partial_{t}b(u)\phi- K_f(b(u))\textbf{e}_3 \cdot\nabla\phi+ \nabla u \cdot\nabla\phi+\gamma\Delta u\Delta \phi\Bigr) \, d\textbf{x}\,dt=0,
	\label{eq:4th-condition1}
	\end{equation}}
	for every test function $\phi \in L^{2}(0,T;H_{0}^{2}(\Omega))$.
	\item $b(u)\in L^{\infty}(0,T;L^1(\Omega))$, $\partial_t b(u)\in L^{2}(0,T;H^{-2}(\Omega))$, and \emph{
	\begin{equation}
	\int_{0}^{T}\int_{\Omega} \partial_{t}b(u)\phi  \, d\textbf{x}\,dt=\int_{0}^{T}\int_{\Omega}\bigl(b(u)-b^{0}\bigr)\partial_{t}\phi \, d\textbf{x}\,dt, \label{eq:4th-condition2}
	\end{equation}}
	holds for all test functions $\phi\in L^{2}(0,T;H_{0}^{2}(\Omega))$ with $\partial_t \phi\in L^{1}(0,T; L^{\infty}(\Omega))$ and $\phi(\cdot,T)=0$.
	\end{enumerate}
	\label{thm:existence}
	 \end{theorem}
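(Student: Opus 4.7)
The plan is to extract a convergent subsequence from the discrete solutions $\{u_M^h\}_{M\in\mathbb{N},h>0}$ using the a priori estimates of Section \ref{sec:4th-a priori}, identify its limit $u$ as the desired weak solution, and then recover the time-derivative identities \eqref{eq:4th-condition1} and \eqref{eq:4th-condition2} by an integration-by-parts argument.

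First I would apply Banach--Alaoglu to the bound from Lemma \ref{lem:apriori1} combined with Corollary \ref{cor:poincare}, extracting along a subsequence a limit $u\in L^2(0,T;H_0^2(\Omega))$ with $u_M^h \rightharpoonup u$ weakly in $L^2(0,T;H_0^2(\Omega))$; in particular $\nabla u_M^h \rightharpoonup \nabla u$ and $\Delta u_M^h \rightharpoonup \Delta u$ weakly in $L^2(\Omega\times(0,T))$. The remaining hypotheses of Proposition \ref{prop:Alt1} are precisely the essential-sup bound on $\int_\Omega B(u_M^h(t))\,d\textbf{x}$ from Lemma \ref{lem:apriori1} and the time-translation estimate from Corollary \ref{cor:apriori2}; applying the proposition yields $b(u_M^h)\to b(u)$ in $L^1(\Omega\times(0,T))$ and $B(u_M^h)\to B(u)$ almost everywhere. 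Since $b\in(0,1]$ and $K_f$ is Lipschitz and bounded by Assumption \ref{ass:fourth-order}, dominated convergence upgrades this to $K_f(b(u_M^h))\to K_f(b(u))$ strongly in $L^2(\Omega\times(0,T))$; it also follows that $b(u)\in L^{\infty}(0,T;L^1(\Omega))$ since $b$ is bounded.

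Next, I would pass to the limit in \eqref{eq:approx.Eq}. Multiplying by a scalar $\eta\in C_c^\infty((0,T))$ and integrating in time, I use summation-by-parts on the backward difference (valid since $u_M^h$ is piecewise constant in time) to transfer the discrete derivative onto $\eta$, producing a term of the form $\tfrac{1}{h}\int_\Omega b(u_M^h(t))(\eta(t)-\eta(t+h))\,d\textbf{x}$ that is uniformly controlled and converges to $-\int_\Omega b(u)\partial_t\eta\,d\textbf{x}$. Combining this with the weak convergence of $\nabla u_M^h,\Delta u_M^h$ and the strong convergence of $K_f(b(u_M^h))$ gives, for test functions of the form $\phi=\eta(t)w_j$, a distributional version of \eqref{eq:4th-condition1}; by density of finite linear combinations of the $\{w_j\}$ in $L^2(0,T;H_0^2(\Omega))$, this extends to all admissible $\phi$. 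The fact that $\partial_t b(u)\in L^2(0,T;H^{-2}(\Omega))$ then follows from Lemma \ref{lem:apriori2}, which provides the required uniform dual-space bound that survives the passage to the limit. To obtain \eqref{eq:4th-condition2} and pick up the initial value $b^0:=b(u^0)$, I would reinstate the boundary terms in the summation-by-parts argument, using the discrete initial condition \eqref{eq:approx.IC} and the $L^2$ convergence $u_M^0\to u^0$ guaranteed by Assumption \ref{ass:fourth-order}(2).

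The main obstacle is the nonlinearity $K_f(b(\cdot))$: weak convergence of $u_M^h$ alone is insufficient to identify its limit, so a strong-compactness step is indispensable. The whole a priori machinery of Section \ref{sec:4th-a priori}, and especially the translation estimate in Corollary \ref{cor:apriori2}, is engineered precisely to feed Proposition \ref{prop:Alt1} and deliver the a.e. pointwise convergence of $b(u_M^h)$, from which strong $L^2$ convergence of $K_f(b(u_M^h))$ then follows by the growth condition in Assumption \ref{ass:fourth-order}(4). A secondary technical issue is the joint limit in $M$ and $h$: one should fix a basis element $w_j$ (which lies in $V_M(\Omega)$ for all $M\ge j$), let $M\to\infty$ with $h$ fixed, exploit density of $\bigcup_M V_M(\Omega)$ in $H_0^2(\Omega)$, and only then send $h\to 0$, with the constraint $h\le 1/\beta$ preserved throughout.
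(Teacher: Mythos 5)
Your proposal follows the same backbone as the paper's proof: weak $L^2(0,T;H_0^2)$ compactness from Lemma \ref{lem:apriori1} and Corollary \ref{cor:poincare}, strong $L^1$ convergence of $b(u^h_M)$ via Proposition \ref{prop:Alt1} fed by Corollary \ref{cor:apriori2}, passage to the limit in the Galerkin system, and identification of $\partial_t b(u)\in L^2(0,T;H^{-2}(\Omega))$ through the uniform bound in Lemma \ref{lem:apriori2} and a discrete integration by parts that also delivers \eqref{eq:4th-condition2} and the initial value $b^0=b(u^0)$. The one place you genuinely diverge from the paper is the treatment of $K_f(b(u^h_M))$: you observe that $K_f$ takes values in $(0,1]$ and hence is uniformly bounded, so a.e.\ convergence of $b(u^h_M)$ plus dominated convergence on the finite-measure set $\Omega\times(0,T)$ yields \emph{strong} $L^2$ convergence $K_f(b(u^h_M))\to K_f(b(u))$. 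The paper instead establishes only \emph{weak} $L^2$ convergence, using the growth condition of Assumption \ref{ass:fourth-order}(4) to get a uniform $L^2$ bound and then invoking Egorov's theorem together with the a.e.\ convergence. Your route is cleaner and gives a stronger conclusion; weak convergence already suffices to pass to the limit against the fixed test field $\textbf{e}_3\cdot\nabla\phi\in L^2$, so either suffices for the theorem, but your version avoids the Egorov step and does not rely on the growth condition for this particular convergence.
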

\begin{proof}
Using Corollary \ref{cor:poincare} and the Weak Compactness theorem, there exists a function $u\in L^{2}(0,T;H_0^2(\Omega))$ such that, up to a subsequence, 
\begin{equation}
 u^h_M\rightharpoonup u\quad\text{in }L^{2}(0,T;H_0^2(\Omega)),
 \label{eq:4th-conv1}
\end{equation}
as $M\rightarrow \infty$ and $h\rightarrow 0$. The next step in the proof is to show that the function $u\in L^2(0,T; H_0^2(\Omega))$ fulfills the conditions \eqref{eq:4th-condition1} and \eqref{eq:4th-condition2}. Thus, we consider an arbitrary test function $\phi\in L^2(0,T;V_m(\Omega))$ such that for a fixed integer $m$ and for almost all $t\in(0,T)$ is given as
\begin{align}
 \phi(t)=\sum_i^m \alpha_{i}^h(t) w_i,
\end{align}
where $\alpha_{i}^h\in L^{\infty}(0,T)$, $i=1,\cdots,m$, are given functions and $w_i\in H_0^2(\Omega)$, $i=1,\cdots,m$, belong to the orthonormal basis of the subspace $V_m(\Omega)$. Choosing $m<M$, multiplying equation \eqref{eq:approx.Eq} by $\alpha_{i}^h(t)$, summing for $i=1,\cdots, m$, and then integrating with respect to time yields
\begin{align}
\dfrac{1}{h} \int_{0}^{\tau} \int_{\Omega} &\left(b(u^h_M(t))-b(u^h_M(t-h)) \right) \phi(t)\,d\textbf{x}\,dt+\int_{0}^{\tau}\int_{\Omega}\nabla u^h_M\cdot\nabla\phi\,d\textbf{x}\,dt\nonumber\\&+\gamma\int_{0}^{\tau}\int_{\Omega}\Delta u^h_M\Delta \phi\,d\textbf{x}\,dt = \int_{0}^{\tau}\int_{\Omega}K_f(b(u^h_M)\textbf{e}_3\cdot \nabla \phi \,d\textbf{x}\,dt.
\label{eq:4th-weakdiscreate}
\end{align}

In the following we show that equation \eqref{eq:4th-weakdiscreate} converges as $m\rightarrow \infty$ and $h\rightarrow 0$ to equation \eqref{eq:4th-condition1}. The weak convergence \eqref{eq:4th-conv1}, Corollary \ref{cor:apriori2}, and Proposition \ref{prop:Alt1} imply the strong convergences,
\begin{align}
  b(u^h_M)\rightarrow b(u)\quad \text{ in } L^{1}(\Omega\times(0,T)),
  \label{eq:4th-conv2}
\end{align}
and 
\begin{align*}
  B(u^h_M)\rightarrow B(u)\quad \text{ almost everywhere. }
\end{align*}
The strong convergence of $ B(u^h_M)$ and the estimate in Lemma \ref{lem:apriori1} leads to
\begin{align}
 B(u)\in L^{\infty}(0,T;L^1(\Omega)).
  \label{eq:4th-conv3}
\end{align}
Hence, Assumption \ref{ass:fourth-order}(2) and the first inequality in Lemma \ref{lem:Bproperties} with $z_0=u^0$ imply
\begin{align}
 b(u)\in L^{\infty}(0,T;L^1(\Omega)).
   \label{eq:4th-conv33}
\end{align}
The Lipschitz continuity of the flux function and the strong convergence \eqref{eq:4th-conv2} imply
\begin{align*}
   K_f(b(u^h_M))\rightarrow K_f(b(u))\quad\text{ in } L^{1}(\Omega\times(0,T)),
\end{align*} 
and consequently, we have
\begin{align}
   K_f(b(u^h_M))\rightarrow K_f(b(u))\quad \text{ almost everywhere. }
  \label{eq:4thh-conv4}
\end{align}
However, we need to prove at least a weak convergence of $ K_f(b(u^h_M))$ in $L^{2}(\Omega\times(0,T))$. For this, we use the growth condition on $K_f$ and \eqref{eq:4th-conv3}. Then, we have
\begin{align*}
 (K_f(b(u)))^2\leq \beta(1+B(u))\in L^{\infty}(0,T;L^1(\Omega)).
\end{align*}
This implies the existence of a constant $C>0$ such that
\begin{align}
 \Vert K_f(b(u)))\Vert_{L^2(\Omega\times(0,T))}\leq C.
 \label{eq:4thh-conv5}
\end{align}
This estimate, the almost everywhere convergence in \eqref{eq:4thh-conv4}, the boundedness of the domain $\Omega\times (0,T)$, and Egorov's theorem imply the weak convergence
\begin{align}
  K_f(b(u^h_M)))\rightharpoonup K_f(b(u))) \quad \text{ in } L^{2}(\Omega\times(0,T)). 
  \label{eq:4th-conv-nonlinear1}
\end{align}
The last step in the proof is to show that
\begin{align*}
 \frac{b(u^h_M(t))-b(u^h_M(t-h))}{h}\rightharpoonup \partial_t b(u) \quad \text{ in } L^{2}(0,T;H_0^{-2}(\Omega)). 
\end{align*}
To do this, we consider the estimate in Lemma \ref{lem:apriori2},
\begin{equation}
\int_{0}^{T}\int_{\Omega}\frac{b(u^h_M(t))-b(u^h_M(t-h))}{h}\phi(t) \,d\textbf{x}\,dt \leq C,
\label{eq:otherform}
\end{equation}
for any $\phi\in L^{2}(0,T;V_{m})$. This uniform estimate implies the existence of a sequence of functionals $v_m^N$ in the dual space $L^2(0,T;V_m^*(\Omega))$ such that
\begin{align}
 \int_{0}^{T} \langle v_m^N,\phi \rangle\,dt= \int_{0}^{T}\int_{\Omega}\frac{b(u^h_M(t))-b(u^h_M(t-h))}{h}\phi \,d\textbf{x}\,dt\leq C.
   \label{eq:4th-weaklimit}
 \end{align}
Hence, there exists a limit $v\in L^2(0,T;H_0^{-2}(\Omega))$ such that
 \begin{align}
  \int_{0}^{T} \langle v_m^N,\phi \rangle\,dt\rightarrow \int_{0}^{T} \langle v,\phi \rangle\,dt
  \label{eq:4th-weaklimit1}
 \end{align}
for all $\phi \in L^2(0,T;V_m(\Omega))$ as $m\rightarrow \infty$ and $h\rightarrow 0$. Since $\bigcup_{m\in\mathbb{N}} V_m$ is dense in $H_0^2(\Omega)$, the convergence result in \eqref{eq:4th-weaklimit1} holds also for all $\phi \in L^2(0,T;H_0^2(\Omega))$. To identify the limit $v$, we consider the test function $\phi \in L^2(0,T;H_0^2(\Omega))$ with $\partial_t \phi\in L^{1}(0,T;L^{\infty}(\Omega))$ and $\phi(t)=0$ for all $t\in (T-h,T]$. Applying summation by parts to the left side of \eqref{eq:otherform} yields
	\begin{align}     
	\int_{0}^{T}\int_{\Omega}&\frac{b(u^h_M(t))-b(u^h_M(t-h))}{h}\phi \,d\textbf{x}\,dt\\
	&=-\frac{1}{h}\int_{-h}^0\int_{\Omega}b(u^h_M)\phi\,d\textbf{x}\,dt-\int_0^T\int_{\Omega} b(u^h_M(t))\frac{\phi(t)-\phi(t-h)}{h}\,d\textbf{x}\,dt,\nonumber\\
	&=\int_0^T\int_{\Omega}\left(b(u_M^{0})-b(u^h_M(t))\right)\frac{\phi(t)-\phi(t-h)}{h} \,d\textbf{x}\,dt,
	\label{eq:integratedEq}
	\end{align}
where we get the last equality using $ \frac{1}{h}\int_{-h}^0 \phi \,dt=-\int_0^T \frac{\phi(t)-\phi(t-h)}{h}\,dt$. Letting $m\rightarrow \infty$ and $h\rightarrow 0$ and using the strong convergence \eqref{eq:4th-conv2}, we have
\begin{align}
  \int_{0}^{T}\int_{\Omega} v \phi \,d\textbf{x}\,dt = \int_{0}^{T}\int_{\Omega}(b(u^0)-b(u)\partial_{t}\phi,
  \label{eq:4th-weaklimit2}
\end{align}
 for all $\phi \in L^2(0,T;H_0^2(\Omega))$ with $\partial_t \phi\in L^{1}(0,T;L^{\infty}(\Omega))$ and $\phi(T)=0$. The right side of \eqref{eq:4th-weaklimit2} corresponds to the definition of the time derivative of $b(u)$ in the distributional sense. Hence, we have $v=\partial_t b(u)$ and we conclude
 \begin{align}
 \frac{b(u^h_M(t))-b(u_M^{h}(t-h))}{h}\rightharpoonup \partial_t b(u) \quad \text{ in } L^{2}(0,T;H_0^{-2}(\Omega)). 
 \label{eq:4th-conv-nonlinear2}
\end{align}

The existence of a function $u\in L^2(0,T;H_0^2(\Omega))$, the convergence results \eqref{eq:4th-conv-nonlinear1}, and \eqref{eq:4th-conv-nonlinear2} imply that equation \eqref{eq:4th-weakdiscreate} convergences as $m\rightarrow \infty$ and $h\rightarrow 0$ to equation \eqref{eq:4th-condition1} for all test function $\phi\in L^2(0,T;H_0^1(\Omega))$. Hence, the function $u$ satisfies the first condition in Theorem \ref{thm:existence}. Clearly, the second condition in Theorem \ref{thm:existence} is also satisfied using equations \eqref{eq:4th-weaklimit2} and \eqref{eq:4th-conv-nonlinear2}.
\end{proof}

\subsection{Uniqueness}
\label{sec:4th-uniqueness}
In this section, we prove the uniqueness of the weak solution of the transformed problem \eqref{eq:linear1}, \eqref{eq:IBC}.
\begin{theorem}
Let Assumption \ref{ass:fourth-order} be satisfied and the transformed saturation $b$ be strictly monotone increasing, i.e. there exists a constant $a>0$ such that 
\[
\min(b'(\cdot))> a >0.
\]
Then, problem \eqref{eq:linear1}, \eqref{eq:IBC} has a unique weak solution that satisfies the properties \eqref{eq:4th-condition1} and \eqref{eq:4th-condition2}.
\label{thm:transformed-unique}
\end{theorem}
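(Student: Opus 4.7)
The plan is a standard energy estimate on the difference of two solutions. Let $u_1, u_2 \in L^2(0,T;H_0^2(\Omega))$ be two weak solutions of \eqref{eq:linear1}, \eqref{eq:IBC} in the sense of Theorem \ref{thm:existence}, both starting from the same initial datum $u^0$, and set $w := u_1 - u_2$. Subtracting the two weak formulations \eqref{eq:4th-condition1} yields, for every admissible test function $\phi$,
\begin{equation*}
\int_0^T\!\!\int_\Omega\Bigl[\partial_t\bigl(b(u_1)-b(u_2)\bigr)\phi + \nabla w\cdot\nabla\phi + \gamma\,\Delta w\,\Delta\phi - \bigl(K_f(b(u_1))-K_f(b(u_2))\bigr)\mathbf{e}_3\cdot\nabla\phi\Bigr]\, d\mathbf{x}\,dt = 0.
\end{equation*}
My aim is to test with $\phi = w\,\chi_{[0,\tau]}$ for an arbitrary $\tau \in (0,T]$, which is admissible because $w \in L^2(0,T;H_0^2(\Omega))$, and then derive a Gronwall-type inequality for $\|w(\tau)\|_{L^2(\Omega)}$.

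The two diffusive terms immediately produce $\int_0^\tau\!\!\int_\Omega |\nabla w|^2 + \gamma(\Delta w)^2\, d\mathbf{x}\,ds$ on the left-hand side. For the transport term on the right, the Lipschitz continuity of $K_f$ and $b$ from Assumption \ref{ass:fourth-order} gives $|K_f(b(u_1))-K_f(b(u_2))| \leq C|w|$, so Cauchy's inequality \eqref{eq:cauchy} controls it by $\tfrac{1}{2}\|\nabla w\|_{L^2(\Omega\times(0,\tau))}^2 + C\int_0^\tau\|w\|_{L^2(\Omega)}^2\,ds$, and the gradient part is absorbed into the left-hand side. The remaining task is to bound $\int_0^\tau\langle\partial_s(b(u_1)-b(u_2)), w\rangle\, ds$ from below.

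For this I would exploit the strict monotonicity $b' \geq a > 0$ via an auxiliary function $\mathcal{G}(z_1,z_2) := \int_{z_2}^{z_1}(b(s)-b(z_2))\,ds$, which satisfies $\mathcal{G}(z_1,z_2) \geq \tfrac{a}{2}(z_1-z_2)^2$ and whose time derivative formally equals $\langle\partial_s(b(u_1)-b(u_2)), u_1-u_2\rangle$ up to a non-positive symmetric remainder. Together with $w(0)=0$ this gives
\begin{equation*}
\int_0^\tau \langle \partial_s(b(u_1)-b(u_2)), w\rangle\, ds \;\geq\; \frac{a}{2}\|w(\tau)\|_{L^2(\Omega)}^2.
\end{equation*}
Assembling everything yields $\tfrac{a}{2}\|w(\tau)\|_{L^2}^2 + \tfrac{1}{2}\|\nabla w\|_{L^2(\Omega\times(0,\tau))}^2 + \gamma\|\Delta w\|_{L^2(\Omega\times(0,\tau))}^2 \leq C\int_0^\tau\|w\|_{L^2(\Omega)}^2\, ds$, and Gronwall's inequality forces $w \equiv 0$, so $u_1 = u_2$.

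The main obstacle is the rigorous justification of the chain-rule identity in the third paragraph: a priori, $\partial_t(b(u_1)-b(u_2))$ only lies in $L^2(0,T;H^{-2}(\Omega))$ while $w$ only lies in $L^2(0,T;H_0^2(\Omega))$, so the formal time-integration by parts is not automatic. I would handle this with a Steklov-average (mollification) argument in the time variable: replace $b(u_i)$ by its temporal Steklov average $[b(u_i)]_\sigma$, perform the classical chain rule on the regularized equation where everything is absolutely continuous in time, and pass to the limit $\sigma \to 0$ using the uniform $L^2$-bounds established in Section \ref{sec:4th-a priori} together with the bi-Lipschitz property of $b$ (Lipschitz from above by Assumption \ref{ass:fourth-order}(3), Lipschitz inverse from below by the strict monotonicity $b' \geq a$) to control the nonlinear composition in the limit.
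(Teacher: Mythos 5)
Your approach is genuinely different from the paper's, and it contains a gap at its central step. You test the difference of the two weak formulations against $\phi = (u_1-u_2)\chi_{[0,\tau]}$ and attempt to handle the term $\int_0^\tau\langle\partial_t(b(u_1)-b(u_2)),\,u_1-u_2\rangle\,dt$ through the auxiliary functional $\mathcal{G}(z_1,z_2)=\int_{z_2}^{z_1}\bigl(b(s)-b(z_2)\bigr)\,ds$. The lower bound $\mathcal{G}(z_1,z_2)\geq\tfrac{a}{2}(z_1-z_2)^2$ is correct, but the claimed total-derivative structure fails. A direct computation gives
\begin{equation*}
\frac{d}{dt}\,\mathcal{G}\bigl(u_1(t),u_2(t)\bigr) = \bigl(b(u_1)-b(u_2)\bigr)\,\partial_t u_1 - b'(u_2)\,(u_1-u_2)\,\partial_t u_2,
\end{equation*}
and comparing with the target pairing $\bigl(b'(u_1)\partial_t u_1 - b'(u_2)\partial_t u_2\bigr)(u_1-u_2)$ leaves the remainder
\begin{equation*}
\partial_t u_1\,\Bigl[b'(u_1)\,(u_1-u_2) - \bigl(b(u_1)-b(u_2)\bigr)\Bigr],
\end{equation*}
which has no definite sign: neither factor is controlled, and the bracket changes sign unless $b$ has a sign-definite second derivative, which is not assumed. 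One can also observe that the symmetrized $\mathcal{G}^{\mathrm{sym}}(z_1,z_2)=\tfrac12\bigl[\mathcal{G}(z_1,z_2)+\mathcal{G}(z_2,z_1)\bigr]$ is exactly $\tfrac12(b(z_1)-b(z_2))(z_1-z_2)$, so it teaches you nothing new, and the asymmetric $\mathcal{G}$ does not have the monotone Lyapunov property you need. Steklov mollification in time does not rescue this: it only regularizes the pairings, it does not create the missing sign. So the inequality you write after ``Together with $w(0)=0$'' is not justified, and the Gronwall step that follows has no input.

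The paper avoids this pitfall by never pairing $\partial_t\bigl(b(u_1)-b(u_2)\bigr)$ with $u_1-u_2$. Instead it sets $g:=b(u_1)-b(u_2)\in L^2(0,T;H^{-2}(\Omega))$ and uses the Riesz representation theorem to define $w\in L^2(0,T;H_0^2(\Omega))$ by $\langle g,\phi\rangle = \int_\Omega\nabla w\cdot\nabla\phi + \gamma\,\Delta w\,\Delta\phi\,d\mathbf{x}$ for all $\phi\in H_0^2(\Omega)$, and then takes $w$ (not $u_1-u_2$) as the test function. This is the Alt--Luckhaus dual-norm device. It has two payoffs simultaneously: the time-derivative pairing becomes an exact total derivative, $\int_0^\tau\langle\partial_t g,w\rangle\,dt = \tfrac12\int_\Omega|\nabla w(\tau)|^2+\gamma(\Delta w(\tau))^2\,d\mathbf{x}$, and the bilinear elliptic part turns, by the very definition of $w$, into $\int_0^\tau\int_\Omega(u_1-u_2)\bigl(b(u_1)-b(u_2)\bigr)\,d\mathbf{x}\,dt$, which is nonnegative by monotonicity. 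The transport term is then absorbed by Cauchy and Gronwall, exactly as you envisioned, but now the Gronwall quantity is $\int_\Omega|\nabla w(\tau)|^2\,d\mathbf{x}$ rather than $\|u_1(\tau)-u_2(\tau)\|_{L^2}^2$. After Gronwall forces $w\equiv 0$, the term $\int_0^\tau\int_\Omega\bigl(b(u_1)-b(u_2)\bigr)(u_1-u_2)\,d\mathbf{x}\,dt$ vanishes, and strict monotonicity of $b$ gives $u_1=u_2$. If you want to pursue your more direct line, you would first need $\partial_t u_i\in L^2(\Omega\times(0,T))$ (Lemma \ref{lem:regularity}, whose proof does not rely on uniqueness) and then a genuinely different bound on the cross term, but even then the obstruction above persists; the dual-norm argument is the cleaner and more robust route here.
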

\begin{proof}
Assume that $u_1$ and $u_2$ are two weak solutions of problem \eqref{eq:linear1} with the initial and boundary conditions \eqref{eq:IBC} that satisfy the properties \eqref{eq:4th-condition1} and \eqref{eq:4th-condition2}. Define also
\begin{align}
 g \coloneqq b(u_1)-b(u_2). 
 \label{eq:uniq-diff}
\end{align}
Then, property \eqref{eq:4th-condition2} implies that $g\in L^{\infty}(0,T;H_0^{-2}(\Omega))$ and, consequently, we obtain $g\in L^2(0,T;H_0^{-2}(\Omega))$. Thus, Riesz Representation theorem implies the existence of a unique function $w\in L^2(0,T;H_0^{2}(\Omega))$ such that for any time $\tau\in[0,T]$
\begin{align}
\int_0^{\tau} \langle g,\phi \rangle\,dt =\int_0^{\tau} \langle w,\phi \rangle\,dt,
 \label{eq:duality1}
\end{align}
for all $\phi \in L^2(0,T;H_0^{2}(\Omega))$, where
\begin{align}
  \langle w,\phi \rangle \coloneqq \int_{\Omega}\nabla w \cdot \nabla \phi\,d\textbf{x}+\gamma \int_{\Omega}\Delta w\Delta \phi\,d\textbf{x}.
 \label{eq:duality}
 \end{align}
Substituting the solutions $u_1$ and $u_2$ into equation \eqref{eq:4th-condition1}, using the test function $w\in L^2(0,T;H_0^2(\Omega))$, then subtracting the two equations and using \eqref{eq:uniq-diff} gives
\begin{align}
 \int_0^{\tau}\int_{\Omega} \partial_t g \, w \, d\textbf{x}\,dt& +  \int_{0}^{\tau}\int_{\Omega}\big( \nabla (u_1-u_2)\cdot\nabla w+\gamma\Delta (u_1-u_2)\Delta w \big) \, d\textbf{x}\,dt\nonumber\\ &= \int_{0}^{\tau}\int_{\Omega} \big( K_f(b(u_1))- K_f(b(u_2))\big)\textbf{e}_3 \cdot\nabla w\, d\textbf{x}\,dt.
\label{eq:difference-u1-u2}
 \end{align}
Approximating the first term on the left side of \eqref{eq:difference-u1-u2} using backward differences then applying summation by parts yields
 \begin{align}
  \int_{0}^{\tau}&\int_{\Omega}\frac{g(t)-g(t-h)}{h}w(t)\,d\textbf{x}\,dt=- \int_{0}^{\tau}\int_{\Omega}g(t)\frac{w(t)-w(t-h)}{h}\,d\textbf{x}\,dt\nonumber \\&+\frac{1}{h}\int_{\tau-h}^{\tau}\int_{\Omega}g(t)w(t)\,d\textbf{x}\,dt-\frac{1}{h}\int_{-h}^{0}\int_{\Omega}g(t)w(t)\,d\textbf{x}\,dt.
  \label{eq:4th-uniq1}
 \end{align}
Using equations \eqref{eq:duality1} and \eqref{eq:duality}, the first term on the right side of \eqref{eq:4th-uniq1} satisfies
\begin{align*}
 \int_{0}^{\tau}\int_{\Omega}g(t)\frac{w(t)-w(t-h)}{h}\,d\textbf{x}\,dt =&  \int_{0}^{\tau}\int_{\Omega}\nabla w\cdot \frac{\nabla w(t)-\nabla w(t-h)}{h}\,d\textbf{x}\,dt\\&+\gamma \int_{0}^{\tau}\int_{\Omega}\Delta w\, \frac{\Delta w(t)-\Delta w(t-h)}{h}\,d\textbf{x}\,dt.
\end{align*}
Applying summation by parts to the right side of the above equation yields
\begin{align}
 \int_{0}^{\tau}\int_{\Omega}g(t)\frac{w(t)-w(t-h)}{h}\,d\textbf{x}\,dt= &\frac{1}{2h} \int_{\tau-h}^{\tau}\int_{\Omega}|\nabla w|^2+\gamma(\Delta w)^2\,d\textbf{x}\,dt\nonumber\\ & -\frac{1}{2h} \int_{-h}^{0}\int_{\Omega}|\nabla w|^2+\gamma(\Delta w)^2\,d\textbf{x}\,dt.
  \label{eq:4th-uniq2}
\end{align}
The second term on the right side of \eqref{eq:4th-uniq1}, using equations \eqref{eq:duality1} and \eqref{eq:duality}, satisfies
\begin{align}
 \frac{1}{h}\int_{\tau-h}^{\tau}\int_{\Omega}g(t)w(t)\,d\textbf{x}\,dt= \frac{1}{h} \int_{\tau-h}^{\tau}\int_{\Omega}|\nabla w|^2+\gamma(\Delta w)^2\,d\textbf{x}\,dt.
 \label{eq:4th-uniq3}
\end{align}
Similarly, the third term on the right side of \eqref{eq:4th-uniq1} satisfies
\begin{align}
 \frac{1}{h}\int_{-h}^{0}\int_{\Omega}g(t)w(t)\,d\textbf{x}\,dt= \frac{1}{h} \int_{-h}^{0}\int_{\Omega}|\nabla w|^2+\gamma(\Delta w)^2\,d\textbf{x}\,dt.
 \label{eq:4th-uniq4}
\end{align}
Substituting equation \eqref{eq:4th-uniq2}, \eqref{eq:4th-uniq3}, and \eqref{eq:4th-uniq4} into equation \eqref{eq:4th-uniq1} gives
\begin{align}
 \int_{0}^{\tau}\int_{\Omega}\frac{g(t)-g(t-h)}{h}w(t)\,d\textbf{x}\,dt= &\frac{1}{2h} \int_{\tau-h}^{\tau}\int_{\Omega}|\nabla w|^2+\gamma(\Delta w)^2\,d\textbf{x}\,dt\nonumber\\ & -\frac{1}{2h} \int_{-h}^{0}\int_{\Omega}|\nabla w|^2+\gamma(\Delta w)^2\,d\textbf{x}\,dt.
\label{eq:4th-uniq5}
 \end{align}
Using equation \eqref{eq:duality1} and the initial choice \eqref{eq:approx.IC}, the second term on the right side of \eqref{eq:4th-uniq5} satisfies
\begin{align*}
 \int_{-h}^{0}\int_{\Omega}|\nabla w|^2+\gamma(\Delta w)^2\,d\textbf{x}\,dt=\int_{-h}^{0}\int_{\Omega}gw\,d\textbf{x}\,dt =0.
\end{align*}
Hence, letting $h\rightarrow 0$ in equation \eqref{eq:4th-uniq5}, we get that for almost all $\tau\in[0,T]$,
\begin{align}
  \int_0^{\tau}\int_{\Omega} \partial_t g \, w \, d\textbf{x}\,dt= \frac{1}{2} \int_{\Omega}|\nabla w(\tau)|^2+\gamma(\Delta w(\tau))^2\,d\textbf{x}.
  \label{eq:uniqT1}
\end{align}
Using \eqref{eq:duality1} with $\phi=u_1-u_2$, the second term on the left side of \eqref{eq:difference-u1-u2} satisfies
\begin{align}
 \int_{0}^{\tau}&\int_{\Omega}\nabla (u_1-u_2)\cdot\nabla w+\gamma\Delta (u_1-u_2)\Delta w \, d\textbf{x}\,dt\nonumber\\&=\int_{0}^{\tau}\int_{\Omega} (u_1-u_2) g\, d\textbf{x}\,dt= \int_{0}^{\tau}\int_{\Omega} (u_1-u_2) \left( b(u_1)-b(u_2)\right)\, d\textbf{x}\,dt.
 \label{eq:uniqT2}
\end{align}
The Lipschitz continuity of $K_f$ and $b$ imply the existence of a constant $L>0$ such that $\max(b'(\cdot)), \,\max(K_f'(\cdot))\leq L$. Using this property and Cauchy's inequality \eqref{eq:cauchy}, with $\epsilon=\frac{1}{2L^2}$, the first term on the right side of equation \eqref{eq:difference-u1-u2} simplifies to
\begin{align*}
\int_{0}^{\tau}\int_{\Omega}&( K(b(u_1)) -  K(b(u_2)))\textbf{e}_3 \cdot\nabla w\, d\textbf{x}\,dt\nonumber\\\leq &\, L\int_{0}^{\tau}\int_{\Omega}\big|( b(u_1)  - b(u_2))\textbf{e}_3 \cdot\nabla w\big|\, d\textbf{x}\,dt\nonumber \\\leq & \,\frac{1}{2L}\int_{0}^{\tau}\int_{\Omega}\left( b(u_1)-b(u_2)\right)^2 d\textbf{x}\,dt+\frac{L^3}{2}\int_{0}^{\tau}\int_{\Omega}|\nabla w|^2\, d\textbf{x}\,dt\nonumber\\
 \leq &\, \frac{1}{2} \int_{0}^{\tau}\int_{\Omega}\big|\big( b(u_1)-b(u_2)\big)(u_1-u_2)\big| d\textbf{x}\,dt+\frac{L^3}{2}\int_{0}^{\tau}\int_{\Omega}|\nabla w|^2\, d\textbf{x}\,dt.
\end{align*}
As the function $b$ is monotone increasing, it follows that 
\begin{align}
 \int_{0}^{\tau}\int_{\Omega}&( K(b(u_1)) -  K(b(u_2)))\textbf{e}_3 \cdot\nabla w\, d\textbf{x}\,dt\nonumber\\\leq &\,\frac{1}{2} \int_{0}^{\tau}\int_{\Omega}\big( b(u_1)-b(u_2)\big)(u_1-u_2) d\textbf{x}\,dt+\frac{L^3}{2}\int_{0}^{\tau}\int_{\Omega}|\nabla w|^2\, d\textbf{x}\,dt.
 \label{eq:uniqT3}
\end{align}
Substituting \eqref{eq:uniqT1}, \eqref{eq:uniqT2}, and \eqref{eq:uniqT3} into \eqref{eq:difference-u1-u2} yields, for almost all $\tau\in[0,T]$,
\begin{align}
 \dfrac{1}{2}\int_{\Omega}|\nabla w(\tau)|^2\,d\textbf{x}+ \dfrac{1}{2}\int_{\Omega}(\Delta w(\tau))^2\,d\textbf{x}&+\frac{1}{2}\int_{0}^{\tau}\int_{\Omega} \left( b(u_1)-b(u_2)\right) (u_1-u_2)\, d\textbf{x}\,dt\nonumber \\ &\leq\frac{L^3}{2}\int_{0}^{\tau}\int_{\Omega}|\nabla w|^2\, d\textbf{x}\,dt.
 \label{eq:4th-unique}
\end{align}
Since $b$ is a monotone increasing function, the third term on the left side of equation \eqref{eq:4th-unique} is nonnegative. Thus, applying Gronwall's inequality to the first term on the left side gives 
\begin{align}
 \int_{\Omega}|\nabla w(\tau)|^2\,d\textbf{x}=0,
 \label{eq:Gronwall-fourth}
\end{align}
for any $\tau\in[0,T]$. Substituting \eqref{eq:Gronwall-fourth} in equation \eqref{eq:4th-unique} yields
\begin{align}
\int_{0}^{\tau}\int_{\Omega} (b(u_1)-b(u_2)) (u_1-u_2)\, d\textbf{x}\,dt=0.
\label{eq:4th-last}
\end{align}
Using the strict monotonicity of $b$, equation \eqref{eq:4th-last} implies that $u_1=u_2$.
\end{proof}

\section{Regularity}
\label{sec:4th-regularity}
In this section, we improve the regularity of the weak solution from $u\in L^2(0,T;H^2_0(\Omega))$ to $u\in H^1(\Omega\times (0,T))\cap L^2(0,T;H^2_0(\Omega))$. For this, it is sufficient to prove that $\partial_{t} u\in L^{2}(\Omega\times(0,T))$.
	\begin{lemma}
	Let Assumption \ref{ass:fourth-order} be satisfied and the transformed saturation $b$ be strictly monotone increasing, i.e. there exists a constant $a>0$ such that $\min(b'(\cdot))> a >0$. Then, the weak solution $u\in L^2(0,T;H_0^2(\Omega))$ of the transformed problem \eqref{eq:linear1} and \eqref{eq:IBC} satisfies the property that $\partial_{t} u\in L^{2}(\Omega\times(0,T))$.
	\label{lem:regularity}
	\end{lemma}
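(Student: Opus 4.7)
The plan is to derive a uniform $L^2(\Omega\times(0,T))$-bound on the backward time-difference quotients of the Galerkin approximations $u^h_M$, and then identify the weak limit of those quotients with $\partial_{t}u$. Concretely, I return to the discrete scheme \eqref{eq:approx.Eq} and test, at each time level $t_n=nh$, with $U_n-U_{n-1}\in V_M(\Omega)$, where $U_n := u^h_M(t_n)$. The $b$-difference term paired with $U_n-U_{n-1}$ is estimated from below using the strict monotonicity $b'(\cdot)\geq a>0$, giving $\tfrac{a}{h}\|U_n-U_{n-1}\|_{L^2}^2$; the two stiff quadratic contributions $\int \nabla U_n\cdot\nabla(U_n-U_{n-1})$ and $\gamma\int \Delta U_n(\Delta U_n-\Delta U_{n-1})$ telescope when summed in $n$ via the identity $2x(x-y)=x^2-y^2+(x-y)^2$, leaving only $\|\nabla U_N\|^2+\gamma\|\Delta U_N\|^2$ on the left and $\|\nabla u_M^0\|^2+\gamma\|\Delta u_M^0\|^2$ on the right, the latter being bounded uniformly in $M$ by Assumption \ref{ass:fourth-order}(2).

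The main obstacle is the convection term $\sum_n \int_{\Omega} K_f(b(U_n))\mathbf{e}_3\cdot \nabla(U_n-U_{n-1})\,d\mathbf{x}$, since $\nabla(U_n-U_{n-1})$ is \emph{not} controlled uniformly in $h$. I would circumvent this by integrating by parts in space; the boundary contribution vanishes because $U_n,U_{n-1}\in V_M\subset H_0^2(\Omega)$. Using the chain rule $\partial_{x_3}K_f(b(U_n))=K_f'(b(U_n))b'(U_n)\partial_{x_3}U_n$ together with the Lipschitz continuity of $K_f$ and $b$ (Assumption \ref{ass:fourth-order}(3)-(4)) yields a pointwise bound $|\partial_{x_3}K_f(b(U_n))|\leq L^2|\partial_{x_3}U_n|$. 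Applying Cauchy's inequality \eqref{eq:cauchy} with a small parameter $\varepsilon>0$ then gives
\begin{equation*}
\left|\sum_{n} \int_{\Omega}K_f(b(U_n))\mathbf{e}_3\cdot\nabla(U_n-U_{n-1})\,d\mathbf{x}\right| \leq \varepsilon\int_0^T\!\!\int_{\Omega}\!\left(\tfrac{u^h_M(t)-u^h_M(t-h)}{h}\right)^{\!2}d\mathbf{x}\,dt + \tfrac{L^4}{4\varepsilon}\int_0^T\!\!\|\partial_{x_3}u^h_M\|_{L^2}^2\,dt.
\end{equation*}
Choosing $\varepsilon<a$, the first term is absorbed into the left-hand side, while the second is bounded uniformly in $M,h$ by Lemma \ref{lem:apriori1}. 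This produces the uniform estimate $\|\tfrac{u^h_M(\cdot)-u^h_M(\cdot-h)}{h}\|_{L^2(\Omega\times(0,T))}\leq C$.

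Finally, I would pass to the limit by weak compactness: a subsequence of the difference quotients converges weakly in $L^2(\Omega\times(0,T))$ to some $w$. To identify $w=\partial_t u$, I would test against $\phi\in C_c^{\infty}(\Omega\times(0,T))$ and apply discrete summation by parts to shift the difference from $u^h_M$ onto $\phi$; invoking the weak convergence $u^h_M\rightharpoonup u$ in $L^2(0,T;H_0^2(\Omega))$ from \eqref{eq:4th-conv1} and the uniform convergence $\tfrac{\phi(t+h)-\phi(t)}{h}\to \partial_t\phi$, the limit $-\int u\,\partial_t\phi$ identifies $w$ as the distributional time derivative of $u$. Since $w\in L^2$, this proves $\partial_t u\in L^2(\Omega\times(0,T))$, which combined with the already established regularity yields $u\in H^1(\Omega\times(0,T))\cap L^2(0,T;H_0^2(\Omega))$. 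The delicate point throughout is the spatial integration by parts on the convection term, without which one would only recover fractional regularity in time (of the kind Corollary \ref{cor:apriori2} directly delivers).
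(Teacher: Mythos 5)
Your proposal follows essentially the same route as the paper: test the discrete scheme against the backward time difference, exploit $b'\geq a$ for coercivity of the nonlinear term, telescope the $\nabla$ and $\Delta$ contributions in time, integrate the convection term by parts in space and absorb via Cauchy's inequality, and conclude by weak compactness in $L^2(\Omega\times(0,T))$. The only cosmetic differences are that you use the algebraic identity $2x(x-y)=x^2-y^2+(x-y)^2$ in place of the paper's summation-by-parts and you keep a free Cauchy parameter $\varepsilon<a$ where the paper fixes $\varepsilon=L^2/a$; both yield the same uniform bound, and both identify the spatial integration by parts on the convection term as the crux.
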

	\begin{proof}
	Multiplying equation \eqref{eq:approx.Eq} by $\frac{\alpha_{Mi}^h(t)-\alpha_{Mi}^{h}(t-h)}{h}$, summing for $i=1,\cdots,M$, integrating from $0$ to $T$, and using the Gauss theorem yields
	\begin{align}
	&\frac{1}{(h)^2}\int_{0}^{T}\int_{\Omega}\left(b(u^h_M(t))- b(u^h_M(t-h))\right)\big(u^h_M(t) -u^h_M(t-h)\big)\,d\textbf{x}\,dt\nonumber\\ &+\int_{0}^{T}\int_{\Omega}\nabla u^h_M(t)\cdot  \frac{\nabla u^h_M(t)-\nabla u^{h}_M(t-h)}{h}+\gamma \Delta u^h_M(t)  \frac{\Delta u^h_M(t)- \Delta u^{h}_M(t-h)}{h}\,d\textbf{x}\,dt\nonumber\\& =-\int_{0}^{T}\int_{\Omega}\nabla\cdot \left(K_f(b(u^h_M(t))\textbf{e}_3\right) \frac{u^h_M(t)-u^{h}_M(t-h)}{h} \,d\textbf{x}\,dt.
	\label{eq:4-well-1}
	\end{align}
	Using the strict positivity of $b'$, the first term on the left side of \eqref{eq:4-well-1} satisfies
	\begin{align}
	\frac{1}{(h)^2}\int_{h}^{T}\int_{\Omega}\big(b(u^h_M(t))- &b(u^h_M(t-h))\big) \left(u^h_M(t) -u^h_M(t-h)\right)\,d\textbf{x}\,dt\nonumber \\& \geq a\int_0^{T}\int_{\Omega}\left(\frac{ u^h_M(t)-u^{h}_M(t-h)}{h}\right)^{2}\,d\textbf{x}\,dt.
	\label{eq:regularity1}
	\end{align}
	Applying summation by parts to the second term on the left side of \eqref{eq:4-well-1}, we have
	\begin{align*}
	2\int_{0}^{T}\int_{\Omega}\nabla u^h_M\cdot \,& \frac{\nabla u^h_M(t)-\nabla u^{h}_M(t-h)}{h}\,d\textbf{x}\,dt\nonumber\\
	 =&\frac{1}{h} \int_{T-h} ^T \int_{\Omega}|\nabla u^h_M(t)|^2\,d\textbf{x}\,dt - \frac{1}{h} \int_{-h}^0\int_{\Omega} |\nabla u^h_M(t)|^2 \,d\textbf{x}\,dt.
	\end{align*}
	Then, as the discrete solution is a step function in time, we get
	\begin{align}
	 \int_{0}^{T}\int_{\Omega}\nabla u^h_M\cdot \,& \frac{\nabla u^h_M(t)-\nabla u^{h}_M(t-h)}{h}\,d\textbf{x}\,dt= \frac{1}{2} \int_{\Omega}|\nabla u^h_M(T)|^2-|\nabla u^{0}_{M}|^2\,d\textbf{x}.
	  \label{eq:regularity2}
	\end{align}
	Similarly, the third term on the right side of \eqref{eq:4-well-1} simplifies to
	\begin{align}
	 \int_{0}^{T}\int_{\Omega}\Delta u^h_M\,  & \frac{\Delta u^h_M(t)-\Delta u^{h}_M(t-h)}{h}\,d\textbf{x}\,dt= \frac{1}{2} \int_{\Omega}\left(\Delta u^h_M(T)\right)^2-\left(\Delta u^{0}_{M}\right)^2\,d\textbf{x}.
	  \label{eq:regularity3}
	\end{align}
	The Lipschitz continuity of $K_f$ and $b$ implies the existence of a constant $L>0$ such that $\max(b'(\cdot)), \,\max(K_f'(\cdot))\leq L$. Using this propoerty and Cauchy's inequality \eqref{eq:cauchy}, with $\epsilon=\tfrac{L^2}{a}$, the right side of \eqref{eq:4-well-1} gives
	\begin{align}
	  \int_{0}^{T}\int_{\Omega}&\left|\nabla\cdot(K_f(b(u^h_{M})\textbf{e}_3)\,\frac{u^h_M(t)-u^{h}_M(t-h)}{h}\right| \,d\textbf{x}\,dt\nonumber\\ & \leq L^2  \int_{0}^{T}\int_{\Omega}\left|\nabla u^h_M \,\frac{u^h_M(t)-u^{h}_M(t-h)}{h}\right|
	  \,d\textbf{x}\,dt\nonumber\\ &\leq \frac{L^4}{a}\int_{0}^{T}\int_{\Omega}\left|\nabla u^h_M\right|^2 \,d\textbf{x}\,dt+ \frac{a}{4 } \int_{0}^{T}\int_{\Omega}\left(\frac{u^h_M(t)-u^{h}_M(t-h)}{h}\right)^2\,d\textbf{x}\,dt.
	  \label{eq:regularity4}
	\end{align}
	Substituting \eqref{eq:regularity1}, \eqref{eq:regularity2}, \eqref{eq:regularity3}, and \eqref{eq:regularity4} into inequality \eqref{eq:4-well-1} gives
	\begin{align*}
	  \frac{3a}{4}\int_0^{T}\int_{\Omega}&\left(\frac{ u^h_M(t)-u^{h}_M(t-h)}{h}\right)^{2}\,d\textbf{x}\,dt+\frac{1}{2}\int_{\Omega}\vert\nabla u^h_M(T)\vert^{2}+\gamma\left(\Delta u^h_M(T)\right)^{2}\,d\textbf{x}\\\leq&  \dfrac{1}{2}\int_{\Omega} |\nabla u_M^0|^2+ \gamma(\Delta u_M^0)^2\,d\textbf{x}+\frac{L^4}{a}\int_{0}^{T}\int_{\Omega}\left|\nabla u^h_M\right|^2 \,d\textbf{x}\,dt.
	\end{align*}
	Then, Lemma \ref{lem:apriori1} implies the existence of a constant $c>0$ such that
	\begin{align*}
	 \int_0^{T}\int_{\Omega}\left(\frac{ u^h_M(t)-u^{h}_M(t-h)}{h}\right)^{2}\,d\textbf{x}\,dt\leq c.
	\end{align*}
	This uniform estimate implies that, up to a subsequence,
	\begin{align}
	 \frac{ u^h_M(t)-u^{h}_M(t-h)}{h} \rightharpoonup  \partial_t u \quad \text{ in } L^2(\Omega\times (0,T)).
	\end{align}
	\end{proof}
	 
	 \begin{corollary}
	   Let the assumptions of Theorem \ref{thm:transformed-unique} be satisfied. Then, we have the strong convergence
	   \begin{align*}
	    u^h_M \rightarrow u \quad \text{ in } L^2(\Omega\times(0,T)).
	   \end{align*}
	 \end{corollary}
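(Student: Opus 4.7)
The plan is to promote the weak $L^2(0,T;H_0^2(\Omega))$ convergence of $\{u^h_M\}$ to $u$, already available from Theorem \ref{thm:existence}, into a strong convergence in $L^2(\Omega\times(0,T))$ via an Aubin--Lions--Simon compactness argument. The crucial new input is the uniform $L^2(\Omega\times(0,T))$ bound on the backward difference quotients provided by Lemma \ref{lem:regularity}; this is strictly stronger than the $H^{-2}$ estimate used in the proof of Theorem \ref{thm:existence}, and it is precisely what upgrades the weak convergence to a strong one.

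Concretely, I would first record the two uniform bounds at hand: the spatial bound that $\{u^h_M\}$ is bounded in $L^2(0,T;H_0^2(\Omega))$ from Corollary \ref{cor:poincare}, and the temporal bound
\[
\Bigl\|\tfrac{u^h_M(\cdot)-u^h_M(\cdot-h)}{h}\Bigr\|_{L^2(\Omega\times(0,T))} \leq c
\]
from Lemma \ref{lem:regularity}. A telescoping Cauchy--Schwarz argument applied to the piecewise-constant-in-time iterates then converts the latter into a uniform time-translation estimate of the form
\[
\|u^h_M(\cdot+\delta)-u^h_M(\cdot)\|_{L^2(\Omega\times(0,T-\delta))} \leq c\,\delta^{1/2},
\]
valid for small $\delta>0$, uniformly in $M$ and $h$. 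Combined with the compact Rellich--Kondrachov embedding $H_0^2(\Omega)\hookrightarrow\hookrightarrow L^2(\Omega)$, Simon's compactness theorem then yields relative compactness of $\{u^h_M\}$ in $L^2(\Omega\times(0,T))$. The subsequential strong limit must coincide with the weak limit $u$ by uniqueness of weak limits in $L^2(\Omega\times(0,T))$, and Theorem \ref{thm:transformed-unique} guarantees that the entire sequence converges.

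The only technical point to treat carefully is the telescoping estimate itself, since the iterates are discrete in time: for a translation parameter $\delta$ which is not an integer multiple of $h$, one must split the increment $u^h_M(\cdot+\delta)-u^h_M(\cdot)$ into one ``partial'' step and several ``clean'' ones, each bounded by the uniform $L^2$ norm of the backward difference quotient. This is a standard manipulation and does not require any analytic input beyond Lemma \ref{lem:regularity}, so no essential obstacle arises.
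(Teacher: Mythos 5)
Your proposal is correct and rests on the same two a priori estimates as the paper: the spatial bound in $L^2(0,T;H_0^2(\Omega))$ from Corollary \ref{cor:poincare} (via Lemma \ref{lem:apriori1}) and the $L^2(\Omega\times(0,T))$ bound on the backward difference quotients from Lemma \ref{lem:regularity}. The paper collapses the argument into a single invocation of ``Rellich--Kondrachov with $n=4$'' applied to the space-time cylinder $\Omega\times(0,T)$, implicitly treating $\{u_M^h\}$ as bounded in an $H^1$-type space over that four-dimensional domain. That step is actually delicate, since the $u_M^h$ are piecewise constant in time and therefore do not belong to $H^1(\Omega\times(0,T))$: their distributional time derivatives concentrate on the time nodes, and a uniform $L^2$ bound on backward difference quotients is not the same as an $L^2$ bound on a weak time derivative of the step functions themselves. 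Your route via a discrete telescoping translation estimate
\[
\|u_M^h(\cdot+\delta)-u_M^h(\cdot)\|_{L^2(\Omega\times(0,T-\delta))}\leq C\,\delta
\]
together with the compact embedding $H_0^2(\Omega)\hookrightarrow\hookrightarrow L^2(\Omega)$ and Simon's compactness criterion is exactly the device that handles piecewise-constant-in-time iterates correctly, and your final observation that Theorem \ref{thm:transformed-unique} upgrades subsequential convergence to convergence of the whole sequence closes the argument properly. One small point: the modulus of continuity you obtain from the telescoping is $O(\delta)$, not $O(\delta^{1/2})$; this is a harmless overestimate, since Simon's theorem only needs the translation norm to vanish as $\delta\to 0$. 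In short, you reach the same conclusion by what is in fact a more scrupulous path than the one-line appeal in the paper.
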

	 \begin{proof}
	  The proof follows using the estimates in Lemma \ref{lem:apriori1} and \ref{lem:regularity} together with Rellich Kondrachov Compactness theorem with dimension $n=4$ of the domain $\Omega\times(0,T)$.
	 \end{proof}

	 \begin{corollary}
	  Let the assumptions of Theorem \ref{thm:transformed-unique} be satisfied. Then, the transformed saturation $b$ satisfies
	  \begin{align*}
	   b(u) \in&\, C([0,T];L^2(\Omega)),
	  \end{align*}
	  and the initial condition satisfies
	  \begin{align*}
	   b(u(0))=b(u^0) \quad\text{almost everywhere.}
	  \end{align*}
	 \end{corollary}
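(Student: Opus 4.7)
The plan is to first lift $u$ to $C([0,T];L^2(\Omega))$ using Lemma \ref{lem:regularity} and a standard Bochner-Sobolev embedding, then transfer the continuity to $b(u)$ via the Lipschitz assumption, and finally identify the initial trace through integration by parts in the weak formulation \eqref{eq:4th-condition2}.

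First I would combine Corollary \ref{cor:poincare}, which gives $u \in L^2(0,T;L^2(\Omega))$, with Lemma \ref{lem:regularity}, which gives $\partial_t u \in L^2(\Omega\times(0,T))$, to conclude $u \in H^1(0,T;L^2(\Omega))$. The one-dimensional Sobolev embedding $H^1(0,T;L^2(\Omega)) \hookrightarrow C([0,T];L^2(\Omega))$ then selects a continuous-in-time representative of $u$. Denoting by $L$ the Lipschitz constant of $b$ from Assumption \ref{ass:fourth-order}(3), the pointwise estimate $|b(u(t,\textbf{x}))-b(u(s,\textbf{x}))| \le L|u(t,\textbf{x})-u(s,\textbf{x})|$ integrates to
\[
\|b(u(t))-b(u(s))\|_{L^2(\Omega)} \le L\,\|u(t)-u(s)\|_{L^2(\Omega)},
\]
so $b(u) \in C([0,T];L^2(\Omega))$ and the trace $b(u)(0)$ is well defined as an element of $L^2(\Omega)$.

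To identify this trace, I would use the same Lipschitz bound on time difference quotients to upgrade $b(u)$ to $H^1(0,T;L^2(\Omega))$, which makes a classical integration by parts in time legitimate. I would then test \eqref{eq:4th-condition2} against separable functions $\phi(\textbf{x},t)=\eta(t)\psi(\textbf{x})$ with $\psi \in H_0^2(\Omega) \hookrightarrow L^\infty(\Omega)$ and $\eta \in C^1([0,T])$ satisfying $\eta(T)=0$; such $\phi$ are admissible in \eqref{eq:4th-condition2}. Integrating the left-hand side by parts in time produces a boundary contribution at $t=0$ proportional to $\eta(0)\int_\Omega b(u(0))\psi\,d\textbf{x}$ together with an interior integral of the form $\int_0^T\int_\Omega b(u)\,\partial_t\phi\,d\textbf{x}\,dt$; the latter cancels against the matching interior term on the right-hand side of \eqref{eq:4th-condition2}, while the contribution involving the constant-in-time datum $b^0=b(u^0)$ integrates directly in $t$ to a second boundary term proportional to $\eta(0)\int_\Omega b(u^0)\psi\,d\textbf{x}$. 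Equating the surviving boundary terms and letting $\psi$ range over the dense subspace $H_0^2(\Omega) \subset L^2(\Omega)$ yields $b(u(0))=b(u^0)$ almost everywhere.

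The main obstacle is purely bookkeeping: arranging the integration by parts so that the sign conventions of \eqref{eq:4th-condition2} make the interior integrals cancel and leave only the desired boundary identity between $b(u(0))$ and $b(u^0)$. The substantive analytic work has already been carried out in Lemma \ref{lem:regularity}, which produced the time-derivative regularity that drives both conclusions.
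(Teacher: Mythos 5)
Your argument is correct and takes a genuinely different path from the paper. For the continuity claim, you pass first to $u\in H^1(0,T;L^2(\Omega))\hookrightarrow C([0,T];L^2(\Omega))$ and then use the Lipschitz bound to transfer continuity to $b(u)$; the paper instead applies the chain rule to get $\partial_t b(u)=b'(u)\partial_t u\in L^2(\Omega\times(0,T))$ directly and reads off $b(u)\in C([0,T];L^2(\Omega))$ from that. For the initial trace, the paper does not use \eqref{eq:4th-condition2} at all: it returns to the Galerkin scheme, performs summation by parts in \eqref{eq:4th-weakdiscreate} to expose the discrete boundary term $\int_\Omega b(u_M^0)\phi(0)\,d\textbf{x}$, passes to the limit $M\to\infty$, $h\to 0$, and compares the result with the time-integrated form \eqref{eq:4th-weak1} of \eqref{eq:4th-condition1}. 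You instead stay entirely at the level of the limit problem and extract the trace by integrating \eqref{eq:4th-condition2} by parts in time, which is legitimate once $b(u)\in H^1(0,T;L^2(\Omega))$. Your route is shorter and avoids reinvoking the discrete scheme, at the cost of being slightly "circular in spirit," since \eqref{eq:4th-condition2} is precisely the weak encoding of the initial datum; the paper's route is more self-contained in the sense that it goes back to the source and re-derives the statement from the approximation.

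One thing to flag: the cancellation you describe only occurs if the right-hand side of \eqref{eq:4th-condition2} carries a minus sign, i.e.\ if it is read as $-\int_0^T\int_\Omega (b(u)-b^0)\partial_t\phi$, equivalently $\int_0^T\int_\Omega (b^0-b(u))\partial_t\phi$. This is the form that actually appears in the paper's own derivation, equation \eqref{eq:4th-weaklimit2}, and the form consistent with the distributional definition of $\partial_t b(u)$; as typeset, \eqref{eq:4th-condition2} has the opposite sign, which you would need to correct before carrying out the integration by parts. With that correction the boundary terms match and $b(u(0))=b(u^0)$ a.e.\ follows, as you claim.
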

	  \begin{proof}
	   The Lipschitz continuity of the transformed saturation $b$ and Lemma \ref{lem:regularity} imply
	   \begin{align}
	    \partial_t b(u)=b'(u) \partial_t u  \in L^2(\Omega\times(0,T)). 
	   \end{align}
	    This yields also that
	  \begin{align}
	    b(u)\in C([0,T];L^2(\Omega)).
	  \end{align}
	  To prove that $b(u(0))=b(u^0)$ almost everywhere, we choose a test function $\phi\in C^1([0,T],H_0^2(\Omega))$ in equation \eqref{eq:4th-condition1} such that $\phi(T)=0$. Then, Gauss theorem gives
	\begin{align}
	\int_{0}^{T}\int_{\Omega} \Bigl(b(u)\partial_{t}\phi- K_f(b(u))\textbf{e}_3 \cdot\nabla\phi + \nabla u \cdot\nabla\phi\Bigr. & \Bigl.+\gamma\Delta u\Delta \phi\Bigr) \, d\textbf{x}\,dt\nonumber\\&=\int_{\Omega} b(u(0))\phi(0) \, d\textbf{x}.
	\label{eq:4th-weak1}
	\end{align}
Applying summation by parts to the first term in equation \eqref{eq:4th-weakdiscreate} yields
\begin{align}
\dfrac{1}{h} \int_{0}^{\tau} \int_{\Omega} & b(u^h_M(t))\big(\phi(t)-\phi(t-h) \big)\,d\textbf{x}\,dt+\int_{0}^{\tau}\int_{\Omega}\nabla u^h_M\cdot\nabla\phi\,d\textbf{x}\,dt\nonumber\\&+\gamma\int_{0}^{\tau}\int_{\Omega}\Delta u^h_M\Delta \phi\,d\textbf{x}\,dt - \int_{0}^{\tau}\int_{\Omega}K_f(b(u^h_M)\textbf{e}_3\cdot \nabla \phi \,d\textbf{x}\,dt\nonumber\\&= \int_{\Omega} b(u_M^0)\phi(0)\,dx\,dz.
\label{eq:4th-initial}
\end{align}
Letting $M\rightarrow \infty$ and $h\rightarrow 0$ in equation \eqref{eq:4th-initial} yields, up to a subsequence, that
\begin{align}
\int_{0}^{\tau} \int_{\Omega} &\partial_t\phi\, b(u(t))\,d\textbf{x}\,dt+\int_{0}^{\tau}\int_{\Omega}\nabla u\cdot\nabla\phi\,d\textbf{x}\,dt+\gamma\int_{0}^{\tau}\int_{\Omega}\Delta u\Delta \phi\,d\textbf{x}\,dt \nonumber\\&- \int_{0}^{\tau}\int_{\Omega}K_f(b(u)\textbf{e}_3\cdot \nabla \phi \,d\textbf{x}\,dt= \int_{\Omega} b(u^0)\phi(0)\,dx\,dz.
\label{eq:4th-weak2}
\end{align}
since $u_M^0\rightarrow u^0$ in $L^2(\Omega)$ as $M\rightarrow \infty$. As $\phi(0)$ is arbitrarily chosen, comparing equation \eqref{eq:4th-weak1} and \eqref{eq:4th-weak2} yields that $b(u(0))=b(u^0)$ almost everywhere.
	  \end{proof}

\section{Well-posedness of the Fourth-Order Model}
\label{sec:4th-original}
In this section, we utilize the well-posedness of the transformed problem \eqref{eq:linear1} and \eqref{eq:IBC} to prove the well-posedness of the fourth-order model \eqref{eq:Armiti2} and \eqref{eq:4th-IBC}. For this, we stress that the coefficients $S,\,S',\,K_f$ are strictly positive. Then, we apply the inverse of Kirchhoff's transformation to the weak solution of the transformed problem \eqref{eq:linear1} and \eqref{eq:IBC}.

\begin{definition}
 Let the function $S=S(p)$ be Lipschitz continuous and $K_f=K_f(S(p))$ be bounded. We call $p\in H^1(\Omega\times(0,T))$ a weak solution of the fourth-order problem \eqref{eq:Armiti2} and \eqref{eq:4th-IBC} if it satisfies the conditions
 \begin{enumerate}
	\item $\partial_t S(p)\in L^{2}(\Omega\times (0,T))$ and $\nabla\cdot\left(K_f(S(p))\nabla p\right)\in L^{2}(\Omega\times (0,T))$ such that\emph{
	\begin{align*}
	\int_{0}^{T}\int_{\Omega} \Bigl(\partial_{t}S(p)\phi - K_f(S(p))\textbf{e}_3 \cdot\nabla\phi&+ K_f(S(p))\nabla p \cdot\nabla\phi\Bigr) \, d\textbf{x}\,dt\nonumber \\ &+\gamma\int_{0}^{T}\int_{\Omega}  \nabla\cdot\bigl(K_f(S(p))\nabla p\bigr)\Delta \phi \, d\textbf{x}\,dt =0,
	\end{align*}}
	for every test function $\phi \in L^{2}(0,T;H_{0}^{2}(\Omega))$.
	\item $S\bigl(p(0)\bigr)=S(p^0)$ almost everywhere. 
	\end{enumerate}
	\label{def:weaksolution2}
\end{definition}
\begin{theorem}
Assume that the initial condition in \eqref{eq:4th-IBC} satisfies $p^0\in H_0^2(\Omega)$ and the saturation function $S\in C^1(\mathbb{R})$ is Lipschitz continuity, strictly positive, and strictly monotone increasing. Assume also that the conductivity function $K_f\in C^1(\mathbb{R})$ is strictly positive, bounded, and monotone increasing. Let $u\in H^1(\Omega\times (0,T))\cap L^2(0,T;H_0^2(\Omega))$ be the weak solution of the transformed problem \eqref{eq:linear1} and \eqref{eq:IBC}. Then $p=\psi^{-1}(u)$, where $\psi$ is Kirchhoff's transformation, is the unique weak solution of the fourth-order problem \eqref{eq:Armiti2} and \eqref{eq:4th-IBC} according to Definition \ref{def:weaksolution2}.
 \label{thm:wellposed-original}
\end{theorem}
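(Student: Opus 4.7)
The strategy is to transfer the already-established well-posedness of the transformed problem \eqref{eq:linear1}, \eqref{eq:IBC} to the original problem via the inverse Kirchhoff map. First I would check that under the present hypotheses Kirchhoff's transformation is a bi-Lipschitz $C^1$-diffeomorphism: since $K_f$ is strictly positive (bounded below by some $c>0$) and bounded above, $\psi'(p)=K_f(S(p))$ is bounded between two positive constants, so $\psi^{-1}:(u_l,\infty)\to\mathbb{R}$ is Lipschitz. Moreover, the transformed coefficient satisfies $b'(u)=S'(\psi^{-1}(u))/K_f(S(\psi^{-1}(u)))\geq a>0$, so the hypotheses of Theorem \ref{thm:transformed-unique} and Lemma \ref{lem:regularity} are met, producing a unique $u\in H^1(\Omega\times(0,T))\cap L^2(0,T;H_0^2(\Omega))$ solving the transformed problem.

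Next I would set $p:=\psi^{-1}(u)$ and invoke the standard chain rule for Sobolev functions composed with a Lipschitz bijection to obtain $p\in H^1(\Omega\times(0,T))$ together with the weak identities in \eqref{eq:chainrule}. In particular, $K_f(S(p))\nabla p=\nabla u$ almost everywhere, hence $\nabla\cdot(K_f(S(p))\nabla p)=\Delta u\in L^2(\Omega\times(0,T))$, and $S(p)=b(u)$, hence $\partial_t S(p)=\partial_t b(u)\in L^2(\Omega\times(0,T))$ via Lemma \ref{lem:regularity} and the Lipschitz continuity of $b$. This already gives the integrability demanded by Definition \ref{def:weaksolution2}(1).

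Then I would substitute these identities and $K_f(b(u))=K_f(S(p))$ into the weak form \eqref{eq:4th-condition1}: term by term, $\partial_t b(u)\phi\mapsto\partial_t S(p)\phi$, $K_f(b(u))\mathbf{e}_3\cdot\nabla\phi\mapsto K_f(S(p))\mathbf{e}_3\cdot\nabla\phi$, $\nabla u\cdot\nabla\phi\mapsto K_f(S(p))\nabla p\cdot\nabla\phi$, and $\gamma\Delta u\,\Delta\phi\mapsto\gamma\,\nabla\cdot(K_f(S(p))\nabla p)\,\Delta\phi$, which is precisely the variational identity in Definition \ref{def:weaksolution2}(1). The initial condition $S(p(0))=S(p^0)$ almost everywhere follows immediately from the previously established $b(u(0))=b(u^0)$ together with $S\circ\psi^{-1}=b$ and $\psi(p^0)=u^0$.

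For uniqueness, any second weak solution $\tilde p$ produces $\tilde u:=\psi(\tilde p)$, and by running the same chain-rule substitutions in reverse one sees that $\tilde u$ satisfies the transformed problem \eqref{eq:linear1}, \eqref{eq:IBC} in the sense of Theorem \ref{thm:existence} with the same initial datum $u^0$; Theorem \ref{thm:transformed-unique} then forces $\tilde u=u$, and the strict monotonicity of $\psi$ forces $\tilde p=p$. The main technical subtlety is the justification of the chain rule \eqref{eq:chainrule} for weak derivatives under the Lipschitz bijection $\psi^{-1}$ — classical for smooth $u$ but here requiring the Marcus--Mizel/Stampacchia composition results for Sobolev functions; once this is in place everything else is a direct substitution.
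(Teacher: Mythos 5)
Your proof follows essentially the same approach as the paper: transfer well-posedness through the inverse Kirchhoff map, use the chain-rule identities \eqref{eq:chainrule} and the strict positivity/boundedness of $K_f$ to pull the $L^2$ bounds on $\nabla u$, $\partial_t u$, $\Delta u$ back to $\nabla p$, $\partial_t p$, $\nabla\cdot\bigl(K_f(S(p))\nabla p\bigr)$, verify Definition \ref{def:weaksolution2} by direct substitution, and deduce uniqueness from the equivalence of the two weak formulations combined with Theorem \ref{thm:transformed-unique} and the strict monotonicity of $\psi$. You are somewhat more explicit than the paper about justifying the Sobolev chain rule under the Lipschitz bijection (Marcus--Mizel/Stampacchia) and about verifying that the hypothesis $\min b'>a>0$ of Theorem \ref{thm:transformed-unique} follows from the stated assumptions on $S$ and $K_f$, but the argument structure is identical.
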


\begin{proof}
Using equation \eqref{eq:chainrule}, Lemma \ref{lem:regularity}, the boundedness and the strict positivity of $K_f$, we have
	\begin{align}
	\begin{array}{cc}
	 \nabla p=& \dfrac{\nabla u}{K_f\bigl(S(\psi^{-1}(u))\bigr)}\quad \in L^{2}(\Omega\times(0,T)),\\
 	\partial_t p=& \dfrac{\partial_t u}{K_f \bigl(S(\psi^{-1}(u))\bigr)}\quad \in L^{2}(\Omega\times(0,T)),
	\end{array}
	  \label{eq:4th-conv-transform}
	\end{align}
where there exists a constant $\delta>0$ such that $K_f>\delta$. These estimeates and Poincar\'{e}'s inequality implies that $p\in H^1(\Omega\times(0,T))$. Consequently, we have
\begin{align}
	    p\in C([0,T];L^2(\Omega)).
	  \end{align}
In addition to this, we have
\begin{align}
\begin{array}{rll}
    \nabla\cdot\bigl(K_f(S(p))\nabla p\bigr)=& \Delta u & \quad\in L^{2}(\Omega\times(0,T)).
\end{array}
\label{eq:result}
\end{align}
The Lipschitz continuity of the saturation $S$ and the second equation in \eqref{eq:4th-conv-transform} imply 
\begin{align*}
 \partial_t S(p)=S'(p)\partial_t p \in L^{2}(\Omega\times(0,T)).
\end{align*}

These estimates imply that $p$ satisfies the conditions in Definition \ref{def:weaksolution2} and, thus, is a weak solution of the fourth-order model \eqref{eq:Armiti2} and \eqref{eq:4th-IBC}. In the same way, if $p\in H^1(\Omega\times(0,T))$ is a weak solution of the fourth-order problem \eqref{eq:Armiti2} and \eqref{eq:4th-IBC} as in Definition \ref{def:weaksolution2}, then the Kirchhoff-transformed $u=\psi(p) \in L^2(0,T;H_0^2(\Omega))$ is a weak solution of the transformed fourth-order problem \eqref{eq:linear1} and \eqref{eq:IBC}. This implies that the fourth-order problem \eqref{eq:Armiti2}, \eqref{eq:4th-IBC} and the transformed fourth-order problem \eqref{eq:linear1} and \eqref{eq:IBC} are equivalent. This equivalency, the uniqueness of the weak solution $u$ of the transformed problem by Theorem \ref{thm:transformed-unique}, and the strict monotonicity of Kirchhoff's transformation imply the uniqueness of the weak solution $p$ of the fourth-order problem \eqref{eq:Armiti2} and \eqref{eq:4th-IBC}. 
\end{proof}

\bibliographystyle{plain}
\bibliography{fourth-order}

\begin{thebibliography}{10}

\bibitem{Alt}
W.~H. Alt and S.~Luckhaus.
\newblock {Quasilinear elliptic-parabolic differential equations}.
\newblock {\em Mathematische Zeitschrift}, 183(3):311--341, 1983.

\bibitem{Alvarez2015}
P.~{\'A}lvarez-Caudevilla and {V. A.} Galaktionov.
\newblock Well-posedness of the cauchy problem for a fourth-order thin film
  equation via regularization approaches.
\newblock {\em Nonlinear Analysis: Theory Methods and Applications},
  121:19--35, 7 2015.

\bibitem{Bear91}
J.~Bear and Y.~Bachmat.
\newblock {\em {Introduction to Modeling of Transport Phenomena in Porous
  Media}}.
\newblock Kluwer Academic Publishers, 1991.

\bibitem{CF}
L.~Cueto-Felgueroso and R.~Juanes.
\newblock {Nonlocal interface dynamics and pattern formation in gravity-driven
  unsaturated flow through porous media}.
\newblock {\em Physical Review Letters}, 101:244504, 2008.

\bibitem{Juanes}
L.~Cueto-Felgueroso and R.~Juanes.
\newblock {Stability analysis of a phase-field model of gravity-driven
  unsaturated flow through porous media}.
\newblock {\em Physical Review Letters}, 79:036301, 2009.

\bibitem{DiCarlo2013}
D.~A. DiCarlo.
\newblock {Stability of gravity-driven multiphase flow in porous media: 40
  years of advancements}.
\newblock {\em Water Resources Research}, 49(6):4531--4544, 2013.

\bibitem{Duan2016}
N.~Duan, X.~Zhao, and .X~Zhao.
\newblock On the solutions of a fourth order parabolic equation modeling
  epitaxial thin film growth.
\newblock {\em Mathematica Slovaca}, 66, 06 2016.

\bibitem{Egorov2003}
A.~G. Egorov, R.~Z. Dautov, J.~L. Nieber, and Al.~Y. Sheshukov.
\newblock {Stability analysis of gravity-driven infiltrating flow}.
\newblock {\em Water Resources Research}, 39(9), 2003.

\bibitem{Evans}
L.~C. Evans.
\newblock {Partial differential equations}.
\newblock {\em American Mathematical Society}, 2010.

\bibitem{HG1993}
S.~M. Hassanizadeh and W.~G. Gray.
\newblock {Thermodynamic basis of capillary pressure in porous media}.
\newblock {\em Water Resources Research}, 29:3389--3406, 1993.

\bibitem{Liu2007}
C.~Liu.
\newblock A fourth-order parabolic equation in two space dimensions.
\newblock {\em Mathematical Methods in The Applied Sciences}, 30:1913--1930, 10
  2007.

\bibitem{Merz2010}
W.~Merz and P.~Rybka.
\newblock Strong solutions to the richards equation in the unsaturated zone.
\newblock {\em Journal of Mathematical Analysis and Applications}, 371(2):741
  -- 749, 2010.

\bibitem{Nieber2005}
J.~L. Nieber, R.~Z. Dautov, A.~G. Egorov, and A.~Y. Sheshukov.
\newblock {Dynamic capillary pressure mechanism for instability in
  gravity-driven flows; Review and extension to very dry conditions}.
\newblock {\em Transport in Porous Media}, 58:147--172, 2005.

\bibitem{Sandjo2015}
{A. N.} Sandjo, S.~Moutari, and Y.~Gningue.
\newblock Solutions of fourth-order parabolic equation modeling thin film
  growth.
\newblock {\em Journal of Differential Equations}, 259(12):7260--7283, 12 2015.

\bibitem{Schweizer2012}
B.~Schweizer.
\newblock {The Richards equation with hysteresis and degenerate capillary
  pressure}.
\newblock {\em Journal of Differential Equations}, 252(10):5594--5612, 2012.

\bibitem{Genuchten1980}
M.~T. van Genuchten.
\newblock {A closed-form equation for predicting the hydraulic conductivity of
  unsaturated soils}.
\newblock {\em Soil science society of America journal}, 44(5):892--898, 1980.

\end{thebibliography}

\end{document}